\documentclass[a4paper,11pt]{article}
\usepackage{bbm}
\usepackage{amsfonts}
\usepackage{graphics,color}
\usepackage{amsmath}
\usepackage{amssymb}
\usepackage{mathrsfs}
\usepackage{cite}
\usepackage{verbatim}
\usepackage{float}
\usepackage{graphicx}
\usepackage{amsthm}
\usepackage{textcomp}
\usepackage{subfig}
\usepackage{esint}
\usepackage{enumerate}
\usepackage{hyperref}

\numberwithin{equation}{section}
\mathchardef\emptyset="001F

\newtheorem{Theorem}{Theorem}[section]
\newtheorem{Definition}[Theorem]{Definition}
\newtheorem{Proposition}[Theorem]{Proposition}
\newtheorem{Corollary}[Theorem]{Corollary}
\newtheorem{Lemma}[Theorem]{Lemma}

\newcommand{\nada}[1]{}
\newcommand{\numberset}{\mathbb}

\newcommand{\eps}{\varepsilon}

\newcommand{\N}{\numberset{N}} 
\newcommand{\Om}{\Omega} 
\newcommand{\R}{\numberset{R}}

\newcommand{\rom}{\mathrm}

\DeclareMathOperator*{\esssup}{ess\,sup}

\theoremstyle{definition}

\newtheorem{Example}[Theorem]{Example}

 \title{Energy maximum principles for vectorial higher-order absolute minimisers in $L^\infty$ and $L^p$
}
\author{
Simone Carano, Nikos Katzourakis and Roger Moser}

\begin{document}
\maketitle

\begin{abstract} 
We show that vectorial absolute minimisers of general $k$-th order supremal functionals in $W^{k,\infty}(\Omega,\mathbb R^N)$ satisfy a maximum principle of the form
$$
\max_{\overline U} \rom{H} \big(\cdot, u,  \mathrm D u, ..., \mathrm D^{k}u\big)=\max_{\partial U}\rom{H} \big(\cdot, u,  \mathrm D u, ..., \mathrm D^{k}u\big), \qquad\forall\ U\subseteq\Om \mbox{ open},
$$
suitably interpreted. This is only necessary for absolute minimisers, whilst it characterises a relevant weaker notion of absolute minimality involving compactly supported variations. Further, we obtain an existence result to the Dirichlet problem for such weaker absolute minimisers, as an application of the Baire Category method. Finally, via different methods, we supplement our results by establish a gradient maximum principle for $p$-harmonic maps for $p<\infty$.
\end{abstract}
{\bf MSC 2020}: 35B38; 49J99; 35J94; 49J35.\\
{\bf Key words and phrases:} Vectorial Calculus of Variations in $L^\infty$; Higher order problems; Absolute Minimisers; Maximum Principle; $p$-harmonic maps.

\section{Introduction}\label{sec:introduction}

For $n,N,k\in\N$, let $\Om\Subset\R^n$ be an open bounded set, and let
\[
\rom{H}:\Om\times\Big(\R^N\times\R^{Nn}
\times\ldots\times\R^{Nn^k}\Big) \longrightarrow \R
\]
be a Carath\'eodory function, where for brevity we symbolise $\R^\Lambda\cong\R^N\times\R^{Nn}\times\ldots\times\R^{Nn^k}$ (namely $\Lambda 
\equiv \Lambda(n,N,k)=N({n^{k+1}-1})({n-1})^{-1}$). Let $\mathscr{L}(\Om)$ be the Lebesgue $\sigma$-algebra of $\Om$ and let also $\mathscr{B}(\R^\Lambda)$ be the Borel $\sigma$-algebra of $\R^\Lambda$. Consider the supremal functional
\begin{align}\label{E infty}
\mathrm E_{\infty}(u, U):=\esssup_{x\in U}\rom{H}\big(x, \mathrm D ^{\vec{k}}u(x)\big),
\end{align}
defined for $u\in W^{k,\infty}(\Om,\R^N)$ and $U\subseteq\Om$ a Lebesgue measurable subset.
In the above, 
\[
\mathrm D^{\vec{k}} u:=(u,\mathrm Du,\ldots, \mathrm D^ku)\ : \ \ \Omega \longrightarrow \R^\Lambda,
\]
denotes the \emph{jet of order $k$} of $u$. One of the main aims of this paper is to establish a maximum principle property for the absolute minimisers of the functional $\mathrm E_\infty$. We recall that $u\in W^{k,\infty}(\Om,\R^N)$ is an \emph{absolute minimiser} of $\mathrm E_\infty$ on $\Omega$ if 
\begin{align}\label{AM def}
\mathrm E_\infty(u,U)\leq \mathrm E_\infty (u+\phi,U),\ \ \forall\ 
 \phi\in W^{k,\infty}_0(U,\R^N),\ \ \forall\  U\subseteq\Om \mbox{ open}.
\end{align}
We observe that this notion is slightly stronger than the standard absolute minimality commonly found in the literature, where minimality is required only on relatively compact open subsets $U \Subset \Omega$. 
\\
Under relatively mild assumptions on the supremand $\rom{H}$, we prove that for an absolute minimiser the essential supremum in \eqref{E infty} is ``attained on the boundary of $U$". In order to give a rigorous meaning to this expression for general $W^{k,\infty}$ mappings, we will utilise the \textit{essential limsup} of $\smash{\rom{H}\big (\cdot,\mathrm D^{\vec{k}}u \big)}$, in order to define the supremum on $\partial U$. More generally, for any $h\in L^\infty(U)$, $U\subseteq \Omega$, we define
\begin{align}\label{rep}
h^*(x):=\lim_{\eps\to   0} \Big(\esssup_{B_\eps(x)\cap U} h\Big),\qquad\forall\  x\in \overline{U}.
\end{align}
The following is the notion of maximum principle that we will utilise herein for (absolute) minimisers of the supremal functional $\mathrm E_\infty$.
\begin{Definition}[Energy maximum principle]
We say that $u\in W^{k,\infty}(\Om,\R^N)$ satisfies the energy maximum principle for $\mathrm E_\infty$ in $\Om$ if
\begin{align*}
\esssup_U \rom{H}\big (\cdot,\mathrm D^{\vec{k}}u \big)=\sup_{\partial U}\rom{H}(\cdot,\mathrm D ^{\vec{k}}u)^*,\qquad\forall\  U\subseteq\Om \text{ open}.
\end{align*}
\end{Definition}
\noindent
Evidently, if $u\in C^k(\overline\Om,\R^N)$ and $\rom{H}\in C(\overline\Om\times\R^\Lambda)$, the above reduces to the more classical pointwise statement
$$
\max_{\overline U} \rom{H}\big (\cdot,\mathrm D^{\vec{k}}u \big)=\max_{\partial U}\rom{H}(\cdot,\mathrm D ^{\vec{k}}u),\qquad\forall\  U\subseteq\Om \mbox{ open}.
$$
Our first main result establishes that this energy maximum principle property is satisfied by absolute minimisers of $\mathrm E_\infty$. More generally, {\it this property holds on any open set $U \subseteq \Omega$ for which $u$ is a minimiser of $\rom E_\infty(\cdot,U)$ with respect to its own boundary data}. However, simple examples show that \emph{it is not sufficient} for the minimality; for instance, the cone function $u(x)=|x|$ belongs to $W^{1,\infty}(\R^n)$ and clearly satisfies the energy (or, equivalently, gradient) maximum principle for $\mathrm E_\infty(u,\Om):=\esssup_\Om|\mathrm Du|$, because $|\mathrm D u|=1$ a.e.\ on $\mathbb R^n$, but $u$ is not a minimiser with respect to its own data on any open $U\subseteq\Om$ which contains the origin (the vertex of the cone). In particular, $u$ is not an absolute minimiser if $\Om$ contains the origin. This raises the question whether we can characterise the class of maps that satisfies the energy maximum principle. We answer this affirmatively, by establishing that {\it the energy maximum principle is a property that characterises a class of maps which is larger than the one of absolute minimisers, and are the ones that minimise $\mathrm E_\infty(\cdot,U)$ with respect to compactly supported variations} in $U$, for every $U\subseteq\Om$.

Before stating our main results precisely, we underline that $\mathrm E_\infty$ in \eqref{E infty} is the general object of study in the $L^\infty$-Calculus of Variations, a field initiated by Aronsson in \cite{A}. His pioneering work on the scalar first order case (namely when $N=k=1$) has been well developed by now, and most challenges have been thoroughly analysed and understood (see \cite{Kbook} for a survey reference). More recently,  the vectorial case ($N>1$) and the higher order case ($k>1$) have begun being explored in a series of works, starting from \cite{K1} and \cite{KM,KP2}. Despite the significant progress in the field, a complete theory is still far from reach. Without any pretension of being exhaustive, we refer also to \cite{K3,K4,KM1} for a glimpse of the literature on vectorial first order problems and to \cite{CKM,DK,KP1,KM2,KM3} for higher order ones. Furthermore, the fractional order case ($k\notin\N$) has been recently explored in \cite{CM}. Let us also note that cornerstone results in the area concerning standard maximum and comparison principles for scalar first order absolute minimisers can be found for instance in \cite{J, ACJS}.  Lastly, further related interesting results in the area appear in the following interesting papers: \cite{AP, A-B, BJ, BDP, BK, CDP, KZ, MWZ, PWZ, PZ, RZ, Ribeiro-Zappale-2024}.

It is worth mentioning that, given boundary data $u_0\in W^{k,\infty}(\Om,\R^N)$, the existence (and/or uniqueness) of absolute minimisers to the Dirichlet problem in $W_{u_0}^{k,\infty}(\Om,\R^N):=u_0+ W^{k,\infty}(\Om,\R^N)$ associated to $\mathrm E_\infty$ in \eqref{E infty} 
 is an open problem already when $k=1$ and $n,N\geq 2$ or when $N=1$ and $n,k\geq 2$. There are some exceptions in the second order case, whenever $\rom H$ has special dependence in the second derivatives of $u$: relevant works are \cite{KM, KM3, KP2, CKM}. Interestingly and perhaps surprisingly, similar results hold true also in the fractional case \cite{CM}.

Now we proceed to specify the assumptions on the function $\rom{H}$. By denoting 
\[
X=(X_0,X_1,\ldots,X_k) \, \in \, \R^N\times \R^{Nn}\times\ldots\R^{Nn^k}\equiv \R^\Lambda, 
\]
we assume that $\rom{H}:\Om\times\R^\Lambda\longrightarrow \R$ is a Carathéodory function (namely measurable with respect to the first variable and continuous with respect to the second), and also \textit{strictly radially increasing} on $\R^\Lambda$, {\it essentially uniformly} with respect to a.e.\ $x\in \Omega$. In symbols, we assume that there exists a continuous function 
\[
c\ :\ [0,1]\times[0,\infty)\longrightarrow [0,\infty), 
\]
satisfying $c(1,\cdot)=c(\cdot,0)=0$ and $c>0$ in $(0,1)\times(0,\infty)$, such that
\begin{align}\label{strong mon}
\rom{H}(x,tX)\leq \rom{H}(x,X)-c(t,|X|),\qquad\forall\  t\in[0,1],\quad\forall\  X\in\R^\Lambda,\quad\mbox{a.e. }x\in \Om.
\end{align}
In other words, $\mathrm H(x,\cdot)$ is a strictly radially increasing function with a modulus of monotonicity independent of $x$.
Note that the radial monotonicity implies that, for almost every $x\in\Om$, $\rom{H}(x,\cdot)$ has a global minimum at $X=0$. Without loss of generality we can assume that $\rom{H}(x,0)=0$ a.e. $x\in\Om$, which implies that $\rom{H}\geq 0$. In this case, we can therefore write
\[
\mathrm E_\infty(u,U)=\|\rom{H}(\cdot, \mathrm D^{\vec{k}}u)\|_{L^\infty(U)},\ \ \ \ \text{for every $U\subseteq\Om$ open.}
\]
The strict radial monotonicity assumption may seem restrictive at first glance, however, the validity of our result extends to a wider class of supremands, even discontinuous ones: indeed, for any function $g:[0,\infty)\longrightarrow [0,\infty)$ which is lower semicontinuous and non-decreasing, set 
\[
\rom G:=g\circ \rom H, \ \ \ \mathrm{\tilde E}_{\infty}(u,U):=\esssup_U \rom G(\cdot,\mathrm D^{\vec{k}} u). 
\]
It is not difficult to see that $\mathrm E_\infty$ and $\mathrm{\tilde E}_{\infty}$ share the same minimisers. In addition, if $u\in W^{k,\infty}(\Om,\R^N)$ satisfies the energy maximum principle for $\mathrm E_\infty$ in $\Om$, then for every $U\subseteq\Om$ open, we have
$$
\mathrm{\tilde E}_{\infty}(u,U)=g\left(\esssup_U \rom{H}(\cdot, \mathrm D^{\vec{k}}u)\right)=g\left(\esssup_{\partial U}\rom{H}(\cdot, \mathrm D^{\vec{k}}u)^*\right)= \esssup_{\partial U}\rom G(\cdot, \mathrm D^{\vec{k}}u)^*,
$$ 
namely $u$ satisfies the energy maximum principle for $\mathrm{\tilde E}_{\infty}$ as well. 

Our second assumption on $\rom{H}$ concerns the essential uniformity of its modulus of continuity, with respect to $x\in\Omega$. More precisely, we assume that for any $R>0$, exists a non-decreasing $\omega_R\in C(0,\infty)$ such that $\omega_R(0^+)=0$ and
\begin{equation}\label{uniformity}
\big|\rom{H}(x,X)-\rom{H}(x,X')\big| \leq \omega_R(|X-X'|),\qquad \text{a.e. } x\in\Omega,\quad\forall\  X\in \overline{\mathbb B}_R(0).
\end{equation}
Since $\rom{H}(x,\cdot)$ is continuous in $\R^\Lambda$, it is uniformly continuous on every closed ball $\overline{\mathbb B}_R(0)\subseteq\R^\Lambda$. Thus, we essentially require that the corresponding modulus of continuity on $\overline{\mathbb B}_R(0)$ is independent of $x\in\Omega$. 

\smallskip

Now we may state our first main result.

\begin{Theorem}[Energy maximum principle for absolute minimisers]\label{thm1}
Let $\Om\Subset\R^n$ be a bounded open set and let $\rom{H}:\Om\times\R^\Lambda\longrightarrow [0,\infty)$ be a Carathéodory function satisfying \eqref{strong mon} and \eqref{uniformity}, with $\rom{H}(x,0)=0$. Assume that $u\in W^{k,\infty}(\Om,\R^N)$ is an absolute minimiser of the functional $\mathrm E_\infty$ (definition \eqref{E infty}). Then $u$ satisfies the energy maximum principle in $\Om$, namely
\begin{align}\label{MP}
\esssup_U \rom{H}\big (\cdot,\mathrm D^{\vec{k}}u \big)=\sup_{\partial U}\rom{H}(\cdot,\mathrm D ^{\vec{k}}u)^*,\qquad\forall\  U\subseteq\Om \mbox{ open}.
\end{align}
Here $\rom{H}(\cdot,\mathrm D ^{\vec{k}}u)^*$ is the essential limsup of $\rom{H}(\cdot,\mathrm D ^{\vec{k}}u)$, defined in \eqref{rep}.
\end{Theorem}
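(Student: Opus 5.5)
The inequality ``$\geq$'' in \eqref{MP} comes straight from \eqref{rep}. Write $h:=\rom{H}(\cdot,\mathrm D^{\vec k}u)\in L^\infty(U)$, $E:=\esssup_U h$ and $M:=\sup_{\partial U}h^*$. For every $x\in\partial U$ and $\eps>0$ we have $\esssup_{B_\eps(x)\cap U}h\le E$, so $h^*\le E$ on $\partial U$ and hence $M\le E$. For ``$\le$'' I will argue by contradiction: assume $M<E$ and build a compactly supported variation $\phi\in W^{k,\infty}_0(U,\R^N)$ with $\mathrm E_\infty(u+\phi,U)<E$. This contradicts \eqref{AM def}. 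The variation shrinks $u$ radially in the interior of $U$, and leaves it essentially unchanged near $\partial U$, where $h$ is already close to $M$.

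\textbf{Step 1 (a collar where $h$ is small).} Fix $\delta>0$ with $M+3\delta<E$. For every $x\in\partial U$, \eqref{rep} gives $\eps_x>0$ with $\esssup_{B_{\eps_x}(x)\cap U}h<M+\delta$. Since $\Om$ is bounded, $\partial U$ is compact and nonempty, so finitely many of these balls, with union $B$, cover $\partial U$. Then $K:=\overline U\setminus B$ is compact, misses $\partial U$, and so $K\subset U$. Moreover $h\le M+\delta$ a.e.\ on $U\setminus K$. Choose a cutoff $\zeta\in C^\infty_c(U)$ with $0\le\zeta\le1$ and $\zeta=1$ on a neighbourhood of $K$.

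\textbf{Step 2 (the competitor).} For $t\in(0,1)$ set $\phi:=-(1-t)\zeta u\in W^{k,\infty}_0(U,\R^N)$, so that $u+\phi=(1-(1-t)\zeta)u$. By Leibniz's rule,
\[
\mathrm D^{\vec k}(u+\phi)=s\,\mathrm D^{\vec k}u+(1-t)\,\mathcal R,\qquad s:=1-(1-t)\zeta\in[t,1].
\]
Here $\mathcal R$ collects products of derivatives of $\zeta$ with lower-order derivatives of $u$. It is supported in $\mathrm{supp}\,\mathrm D\zeta\subset U\setminus K$, and $|\mathcal R|\le C$ with $C=C(\zeta,\|u\|_{W^{k,\infty}})$ independent of $t$. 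Let $R$ be $\|\mathrm D^{\vec k}u\|_\infty+C$.

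\textbf{Step 3 (estimates on each region).}
\begin{itemize}
\item Where $\zeta=0$, nothing changes, so $h\le M+\delta$ there.
\item On $U\setminus K$ we use \eqref{strong mon} (which gives $\rom H(x,s X)\le\rom H(x,X)$) and \eqref{uniformity}:
\[
\rom H\big(x,\mathrm D^{\vec k}(u+\phi)\big)\le \rom H\big(x,s\mathrm D^{\vec k}u\big)+\omega_R\big((1-t)C\big)\le M+\delta+\omega_R\big((1-t)C\big).
\]
This is $<M+2\delta$ once $t$ is close to $1$.
\item On $K$ we have $\mathrm D^{\vec k}(u+\phi)=t\,\mathrm D^{\vec k}u$, and we need a uniform gap. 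By \eqref{uniformity} and $\rom H(x,0)=0$, we have $\rom H(x,X)\le\omega_R(|X|)$. Hence there is $r>0$ such that $\rom H(x,X)>E/2$ forces $|X|\ge r$. Where $h\le E/2$ we get $\rom H(x,t\mathrm D^{\vec k}u)\le E/2$. Elsewhere \eqref{strong mon} gives
\[
\rom H(x,t\mathrm D^{\vec k}u)\le E-\min_{\rho\in[r,R]}c(t,\rho),
\]
and this minimum is positive by continuity of $c$ and $c>0$ on $(0,1)\times(0,\infty)$.
\end{itemize}
Combining the three regions gives $\mathrm E_\infty(u+\phi,U)<E=\mathrm E_\infty(u,U)$, which is the desired contradiction.

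\textbf{Main obstacle.} The delicate point is Step 3 near the transition layer $\mathrm{supp}\,\mathrm D\zeta$. There the jet of $u+\phi$ is \emph{not} a radial contraction of $\mathrm D^{\vec k}u$, because of the Leibniz error terms. This is why the $x$-uniform modulus $\omega_R$ from \eqref{uniformity} is needed: the cutoff, and hence $C$, is fixed first, and only then is $t\to1$ sent, so that $(1-t)C$ makes the error negligible against the margin $E-M$.
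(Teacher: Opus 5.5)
Your proposal is correct and follows essentially the paper's proof. Your competitor $(1-(1-t)\zeta)u$ is exactly the paper's $u_\lambda=\lambda u+(1-\lambda)\varphi_\eps u$ with $\zeta=1-\varphi_\eps$ and $t=\lambda$, and your two-region estimate is the paper's: the $x$-uniform modulus $\omega_R$ handles the collar near $\partial U$, and strict radial monotonicity with $|\mathrm D^{\vec k}u|$ bounded away from zero gives the gap in the interior; the finite ball cover producing $K$ just inlines the compactness argument of Lemma \ref{lemma esssup}.
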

The reader should bear in mind that, more generally, the proof of Theorem \ref{thm1} establishes that the property in \eqref{MP} remains valid for every fixed open set $U\subseteq\Om$ such that $u$ is a minimiser for $\rom E_\infty(\cdot,U)$ among all competitors $v\in W_u^{k,\infty}(U,\R^n):=u+W_0^{k,\infty}(U,\R^n)$. Namely, we have the following more general result.
\begin{Proposition}[Minimality implies energy maximum principle]
Let $U\Subset\R^n$ be a bounded open set and let $\rom E_\infty$ be as in \eqref{E infty}. Let also $\rom{H}:U\times\R^\Lambda\longrightarrow [0,\infty)$ be a Carathéodory function satisfying \eqref{strong mon} and \eqref{uniformity}, with $\rom{H}(x,0)=0$. Assume that $u\in W^{k,\infty}(U,\R^N)$ satisfies
$$
\mathrm E_\infty(u,U)\leq \mathrm E_\infty (u+\phi,U),\ \ \forall\ 
 \phi\in W^{k,\infty}_0(U,\R^N).
$$ 
Then, we have that
$$
\esssup_U \rom{H}\big (\cdot,\mathrm D^{\vec{k}}u \big)=\sup_{\partial U}\rom{H}(\cdot,\mathrm D ^{\vec{k}}u)^*.
$$
\end{Proposition}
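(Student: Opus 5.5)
We argue by contradiction. Set $M:=\esssup_U \rom{H}(\cdot,\mathrm D^{\vec k}u)$ and $m:=\sup_{\partial U}\rom{H}(\cdot,\mathrm D^{\vec k}u)^*$.

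The inequality $m\le M$ is immediate. Indeed, for $x\in\partial U$ we have $h^*(x)\le \esssup_U h$ directly from \eqref{rep}, with $h=\rom{H}(\cdot,\mathrm D^{\vec k}u)$.

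So suppose $m<M$, and fix $\delta>0$ with $m+2\delta<M$. By definition of $h^*$ and compactness of $\partial U$, for each $x\in\partial U$ there is a ball $B_{\eps_x}(x)$ on which $\esssup_{B_{\eps_x}(x)\cap U}h<m+\delta$. Finitely many such balls cover $\partial U$. Hence there is a neighbourhood $V$ of $\partial U$, relatively open in $\overline U$, containing $\{x\in U:\rom{dist}(x,\partial U)<\eta\}$ for some $\eta>0$, with $h\le m+\delta$ a.e. in $V\cap U$.

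The plan is to build a competitor that shrinks the jet radially away from the boundary layer. Take a cutoff $\zeta\in C^\infty_c(U)$ with $0\le\zeta\le1$ and $\zeta\equiv1$ on $\{\rom{dist}(\cdot,\partial U)\ge\eta\}$, and set
$$
v:=(1-t\zeta)u,\qquad \phi:=-t\zeta u\in W^{k,\infty}_0(U,\R^N),\quad t\in(0,1).
$$
On $\{\zeta=1\}$ we have $\mathrm D^{\vec k}v=(1-t)\mathrm D^{\vec k}u$. So by \eqref{strong mon},
$$
\rom{H}(x,\mathrm D^{\vec k}v)\le h-c(1-t,|\mathrm D^{\vec k}u|).
$$
Where $h$ is close to $M$, say $h>M-\delta$, the jet satisfies $|\mathrm D^{\vec k}u|\ge r_0>0$. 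This follows from the uniform continuity \eqref{uniformity} together with $\rom{H}(x,0)=0$, since small jets have small energy. Thus on that set we gain the positive amount $\min_{s\in[r_0,R]}c(1-t,s)>0$, where $R=\|\mathrm D^{\vec k}u\|_\infty$. On the rest of $\{\zeta=1\}$, where $h\le M-\delta$, radial monotonicity already gives $\rom{H}(x,\mathrm D^{\vec k}v)\le M-\delta$.

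In the transition layer $\{0<\zeta<1\}$ the jet of $v$ is
$$
\mathrm D^{\vec k}v=(1-t\zeta)\mathrm D^{\vec k}u+O(t),
$$
where the error collects the Leibniz terms involving derivatives of $\zeta$ and is bounded by $Ct\|u\|_{W^{k,\infty}}\|\zeta\|_{C^k}$. Combining \eqref{strong mon} with \eqref{uniformity} gives
$$
\rom{H}(x,\mathrm D^{\vec k}v)\le h+\omega_{R+1}(Ct)\le m+\delta+\omega_{R+1}(Ct)\quad\text{there.}
$$
Choosing $t$ small makes this below $M-\delta/2$. Here $\eta$, hence $\zeta$, is fixed before $t$, so $C$ does not depend on $t$.

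Collecting the three regions, $\mathrm E_\infty(v,U)<M=\mathrm E_\infty(u,U)$. This contradicts minimality.

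The main obstacle is the region where $h$ is near $M$, because there we need a \emph{uniform} strict decrease. I handle it via the modulus $c$ together with the lower bound $r_0$ on $|\mathrm D^{\vec k}u|$. The boundary layer is controlled only through the essential limsup, which is why $V$ must be extracted by compactness of $\partial U$; this requires $U$ bounded, which is assumed.

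One subtlety: the $O(t)$ Leibniz terms must be estimated inside a fixed ball $\overline{\mathbb B}_{R+1}(0)$ so that $\omega_{R+1}$ applies. This holds once $t$ is small.
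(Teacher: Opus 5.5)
Your proof is correct and follows essentially the same route as the paper. Your competitor $(1-t\zeta)u$ is exactly the paper's $u_\lambda=\lambda u+(1-\lambda)\varphi_\varepsilon u$ with $\lambda=1-t$ and $\varphi_\varepsilon=1-\zeta$, and the argument has the same two parts. The boundary layer is controlled by compactness of $\partial U$ (the paper's lemma on the essential limsup) together with the uniform modulus $\omega_R$. The interior gains a fixed amount from the modulus $c$, using a lower bound on $|\mathrm{D}^{\vec{k}}u|$ where $h$ is near $M$.

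The differences are cosmetic. You compare with $(1-t\zeta)\mathrm{D}^{\vec{k}}u$ rather than $\lambda\mathrm{D}^{\vec{k}}u$, and you derive the lower bound on the jet from the uniform continuity assumption and $\mathrm{H}(x,0)=0$, which is slightly more careful than the paper's appeal to continuity of $\mathrm{H}$ at $0$. Your transition-layer estimate also covers the region $\{\zeta=0\}$, which you did not list among the regions, so nothing is missing there.
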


In view of the Proposition above, the energy maximum principle can be equivalently interpreted as stating that the \textit{attainment set} $U(u)$ of a minimiser $u$ must contain the boundary of $U$. The attainment set $U(u)$ is the collection of all points $x\in \overline U$ where the essential supremum is ``attained", namely
\[
U(u) := \Big\{ x\in \overline U \ :  \ \rom{H}(x, \mathrm D ^{\vec{k}}u(x))=\esssup_U \rom{H}\big (\cdot,\mathrm D^{\vec{k}}u \big)\Big\},
\]
and it is well defined and non-empty in case $u\in C^k(\overline U,\R^N)$ and $\rom{H}\in C(\overline U\times\R^\Lambda)$. Since we will not exploit this point of view any further in this paper, we refrain from providing any further details. For a definition of $U(u)$ in the non-smooth setting we refer to \cite{BDP}, where the authors show also a minimality property of $U(u)$ for absolute minimisers in the scalar first order case.

As we mentioned earlier, in general the energy maximum principle does not characterise minimality with respect to general $W^{1,\infty}$ variations vanishing on the boundary, but instead it turns out that it characterises a different smaller class of compactly supported $W^{1,\infty}$ variations. Therefore, in order to state our second result, we give the precise definition of this notion. 

\begin{Definition}[The class $\mathrm {AM}_c(\Om,\R^N)$]

We say that $u\in W^{k,\infty}(\Om,\R^N)$ is an absolute minimiser with respect to  compactly supported variations, and we write $u\in \mathrm {AM}_c(\Om,\R^N)$, if
\begin{equation}\label{AM_c}
\mathrm E_\infty(u,U)\leq \mathrm E_\infty (u+\phi,U),\qquad\forall\  \phi\in W^{k,\infty}_c(U,\R^N),\quad\forall\  U\subseteq\Om\mbox{ open}.
\end{equation}
\end{Definition}

We can now complement Theorem \ref{thm1} with the following characterisation.

\begin{Theorem}[Characterisation of $\mathrm {AM}_c(\Om,\R^N)$ via the energy maximum principle]\label{thm2}
Let $\Om\Subset\R^n$ be a bounded open set and let $\rom{H}:\Om\times\R^\Lambda\longrightarrow [0,\infty)$ be a Carathéodory function satisfying \eqref{strong mon} and \eqref{uniformity}, with $\rom{H}(x,0)=0$. Consider the supremal  functional $\mathrm E_\infty$, defined in \eqref{E infty}.
Then, for any map $u\in W^{k,\infty}(\Om,\R^N)$, the following statements are equivalent:
\begin{enumerate}
\item[(i)] $u$ satisfies the energy maximum principle for $\mathrm E_\infty$ in $\Om$, i.e.\ \eqref{MP} holds true;
\item[(ii)] $u$ is an absolute minimiser with respect to compactly supported variations, i.e.\ \eqref{AM_c} holds true.
\end{enumerate}
\end{Theorem}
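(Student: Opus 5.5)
We prove the two implications separately. The underlying mechanism is the same as in the proof of Theorem \ref{thm1}: for a fixed open $U$, we compare the interior supremum with the boundary essential limsup by replacing $u$ with a suitably scaled copy of itself.

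\emph{(ii) $\Rightarrow$ (i).} Fix $U\subseteq\Om$ open and set $M:=\esssup_U \rom H(\cdot,\mathrm D^{\vec k}u)$ and $m:=\sup_{\partial U}\rom H(\cdot,\mathrm D^{\vec k}u)^*$. The inequality $m\le M$ is immediate from \eqref{rep}. Suppose instead that $m<M$.

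By definition of $h^*$ and compactness of $\partial U$ (or of $\partial U\cap\overline{\Om}$, handled via a neighbourhood), there is $\delta>0$ such that
$$\rom H(\cdot,\mathrm D^{\vec k}u)\le m+\eta \quad \text{a.e. in } U_\delta:=\{x\in U:\ \mathrm{dist}(x,\partial U)<\delta\},$$
with $m+\eta<M$. Take a cutoff $\zeta\in C^\infty_c(U)$ with $0\le\zeta\le 1$ and $\zeta\equiv 1$ on $U\setminus U_\delta$. For $t\in(0,1)$ close to $1$, set $\phi:=-(1-t)\zeta u\in W^{k,\infty}_c(U,\R^N)$. Then $u+\phi=tu$ on $U\setminus U_\delta$. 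There, \eqref{strong mon} together with $|\mathrm D^{\vec k}u|\le R:=\|u\|_{W^{k,\infty}}$ gives a strict decrease of $\rom H$ below $M$. In $U_\delta$ the jet of $u+\phi$ differs from that of $u$ by $O((1-t)\|\zeta\|_{C^k}\|u\|_{W^{k,\infty}})$, so by \eqref{uniformity} we get $\rom H(\cdot,\mathrm D^{\vec k}(u+\phi))\le m+\eta+\omega_{R'}(C(1-t))<M$ for $t$ near $1$.

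The main obstacle is the region $U\setminus U_\delta$. The decrease $c(t,|X|)$ vanishes as $|X|\to0$, so we must argue that points where $\rom H(x,X)$ is close to $M$ have $|X|$ bounded below. This holds by \eqref{uniformity}: since $\rom H(x,0)=0$, we have $\rom H(x,X)\le\omega_R(|X|)$. Hence, writing $\mu:=\min\{c(t,s):\ s\in[s_0,R]\}>0$ for the lower bound $s_0$ so obtained, we find
$$\rom H(x,tX)\le \max\{M-\mu,\ \omega_R(s_0)\}<M.$$
Consequently $\mathrm E_\infty(u+\phi,U)<M$, contradicting \eqref{AM_c}.

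\emph{(i) $\Rightarrow$ (ii).} Fix $U$ open and $\phi\in W^{k,\infty}_c(U,\R^N)$, and let $V:=U\setminus\mathrm{supp}\,\phi$ together with an open $W$ such that $\mathrm{supp}\,\phi\subset W\Subset U$. Then $u+\phi=u$ on $U\setminus \overline W$, which is a nonempty neighbourhood of $\partial U$ inside $U$. Apply \eqref{MP} to $U$: we have $\mathrm E_\infty(u,U)=\sup_{\partial U}\rom H(\cdot,\mathrm D^{\vec k}u)^*$. Since the essential limsup at $x\in\partial U$ only involves values in $B_\eps(x)\cap U\subseteq U\setminus\overline W$ for small $\eps$, it is bounded by
$$\esssup_{U\setminus \overline W}\rom H(\cdot,\mathrm D^{\vec k}u)=\esssup_{U\setminus\overline W}\rom H(\cdot,\mathrm D^{\vec k}(u+\phi))\le \mathrm E_\infty(u+\phi,U).$$
This yields \eqref{AM_c}. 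The case $\partial U=\emptyset$, i.e.\ $U=\R^n$, is excluded because $\Om$ is bounded.
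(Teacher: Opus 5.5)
Your proof is correct and follows essentially the paper's argument. For (ii)$\Rightarrow$(i), your competitor $u+\phi=(1-(1-t)\zeta)u$ is exactly the paper's $u_\lambda=\lambda u+(1-\lambda)\varphi_\eps u$ with $\zeta=1-\varphi_\eps$ and $t=\lambda$; the boundary layer is controlled via \eqref{uniformity} (Lemma \ref{lemma esssup} supplies your $\delta$), and the interior via \eqref{strong mon} together with the lower bound on $|\mathrm D^{\vec k}u|$ that comes from $\rom H(x,0)=0$ and \eqref{uniformity}. For (i)$\Rightarrow$(ii), your pointwise bound of $\rom H(\cdot,\mathrm D^{\vec k}u)^*$ at each $x\in\partial U$ by the essential supremum over $U\setminus\overline W$ is the same idea as the paper's, and it simply bypasses the (easy direction of the) lemma.
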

The proof of Theorems \ref{thm1} and \ref{thm2} are given in Section \ref{sec:proofs}, preceeded by a short preparatory section. In the latter, we recall the notions of strict radial monotonicity and essential limsup, along with some of their fundamental properties. 

Finally, we consider the problem of deriving a corresponding energy maximum principle result for the case of integral $L^p$ functionals for $p<\infty$. In the integral case the situation is quite different conceptually, and therefore for simplicity we restrict our attention to the case of $k=1$ and $\mathrm H$ being the Euclidean norm on the matrix space $\R^{N \times n}$. To this aim, we establish a gradient maximum principle for $p$-harmonic maps. To the best of our  knowledge, a result of this kind is not known in the literature, at least in the vectorial case. In the scalar case, a gradient maximum principle for quasilinear elliptic equations can be found in \cite[Chapter 15]{GT}. We refer also to \cite[Appendix B]{PZZ} regarding gradient maximum and minimum principles for $p$-harmonic potentials on convex rings in the plane. In the case of general $p$-harmonic maps, however, one can only expect a gradient maximum principle to hold, since a minimum principle is not true in general, as existing examples show (see \cite{BBDS}).

Our third and final main result is therefore the following.

\begin{Theorem}\label{thm p}
Let $\Om\subseteq\R^n$ an open set and $p\in[2,\infty)$. Assume that $u:\Om\longrightarrow \R^N$ is a $p$-harmonic map, i.e.\ $u\in W^{1,p}_{\mathrm{loc}}(\Om,\R^N)$ satisfies
\begin{align}\label{p eq}
\mathrm{div} (|\rom Du|^{p-2}\rom Du)=0\qquad\mbox{in }\Om,
\end{align}
in the sense of distributions. Then, for every open set $U\Subset\Om$, we have
\begin{align}\label{GMP}
\max_{\overline U}|\rom Du|=\max_{\partial U}|\rom Du|.
\end{align}
\end{Theorem}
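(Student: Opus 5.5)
Since $|\mathrm Du|^2$ is the natural quantity to test, the plan is to show that a suitable function of it is a subsolution of a linear elliptic equation in divergence form, and then invoke the weak maximum principle. Formally, for a smooth $p$-harmonic map with $\mathrm Du\neq0$, one differentiates \eqref{p eq} in direction $x_\beta$, multiplies by $\partial_\beta u$ and sums over $\beta$. Writing $v:=|\mathrm Du|^2$, this yields an identity of the form
$$
\mathrm{div}\Big(|\mathrm Du|^{p-2}\big(\mathrm D v+(p-2)\,\mathcal{B}(\mathrm Du,\mathrm D\mathrm Du)\big)\Big)=2|\mathrm Du|^{p-2}|\mathrm D^2u|^2+(p-2)\,\mathcal Q\ \geq 0 ,
$$
where $\mathcal B,\mathcal Q$ are the terms produced by differentiating $|\mathrm Du|^{p-2}$. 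The key algebraic step is the vectorial case. In the scalar case one rewrites the operator as $a_{ij}(\mathrm Du)\partial_{ij}v$, but for $N>1$ the coefficient tensor $\delta_{\alpha\beta}\delta_{ij}+(p-2)\frac{\partial_i u^\alpha\partial_j u^\beta}{|\mathrm Du|^2}$ couples the components. So the computation should be organised as the divergence of $A(\mathrm Du)\mathrm Dv$, namely
$$
\partial_i\Big(|\mathrm Du|^{p-2}\big(\partial_i v+(p-2)|\mathrm Du|^{-2}\partial_i u^\alpha\partial_j u^\alpha\,\partial_j v/2\big)\Big)\cdots .
$$
Checking carefully, the cross term $\partial_j u^\alpha \partial_i u^\alpha$ is in fact symmetric and contracts with $\partial_j v$. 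The remainder should be nonnegative, and for $p\ge2$ this nonnegativity is exactly where the hypothesis $p\geq2$ is used: one gets $|\mathrm D^2u|^2$ plus $(p-2)$ times a sum of squares, in the style of Uhlenbeck's computation. Thus $v$, or better $|\mathrm Du|^p$, is a weak subsolution of the uniformly elliptic operator $\mathrm{div}(\tilde A\,\mathrm D\,\cdot)$, where $\tilde A$ is symmetric with eigenvalues between $|\mathrm Du|^{p-2}$ and $(p-1)|\mathrm Du|^{p-2}$.

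Since $p$-harmonic maps are only $C^{1,\alpha}$ (Uhlenbeck), the next step is to regularise. Replace the equation by $\mathrm{div}((\eps+|\mathrm Du_\eps|^2)^{(p-2)/2}\mathrm Du_\eps)=0$ on a smooth domain $U'$ with $U\Subset U'\Subset\Om$, with $u_\eps=u$ on $\partial U'$. These solutions are smooth. The computation above then gives that $v_\eps=\eps+|\mathrm Du_\eps|^2$ is a classical subsolution of a linear elliptic operator $L_\eps v=\partial_i(a^\eps_{ij}\partial_j v)$ with smooth, positive definite coefficients on $\overline U$. The classical weak maximum principle then gives $\max_{\overline U} v_\eps=\max_{\partial U}v_\eps$.

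Finally, pass $\eps\to0$. By Uhlenbeck/Tolksdorf-type uniform $C^{1,\alpha}_{\mathrm{loc}}$ estimates, which are uniform in $\eps$, together with uniqueness of the $p$-harmonic Dirichlet problem (strict convexity of the energy), we get $u_\eps\to u$ in $C^1(\overline U)$. Hence $\max_{\overline U}|\mathrm Du|=\max_{\partial U}|\mathrm Du|$, where $\max_{\partial U}\le\max_{\overline U}$ is trivial and the maximum exists because $\mathrm Du$ is continuous.

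The main obstacle is the algebraic step: verifying in the vectorial, coupled setting that the differentiated equation really produces a divergence-form operator acting on $v$ alone, and that the leftover terms are nonnegative for $p\ge2$. I expect the right device is Uhlenbeck's structure, in which the coefficients depend on $\mathrm Du$ only through $|\mathrm Du|$. This is what lets the coupling collapse into a scalar equation for $|\mathrm Du|^2$. A secondary issue is citing uniform $C^{1,\alpha}$ estimates for the regularised system.
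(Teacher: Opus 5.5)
Your proposal is correct. Its algebraic core is the same as the paper's: differentiating in $x_i$, testing with $\partial_i u$ and summing makes the flux collapse to $A\nabla f$, with the scalar $n\times n$ matrix $A=I+(p-2)\frac{\mathrm Du^{\rm T}\mathrm Du}{|\mathrm Du|^2}$ and $f=\frac1p|\mathrm Du|^p$, plus a nonnegative remainder. So the vectorial coupling you worry about is not actually an obstacle.

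The genuine difference is how the degeneracy at $\mathscr C=\{\mathrm Du=0\}$ is handled.

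\emph{The paper's route.} It does not regularise. It shows that $u$ is smooth on $\Omega\setminus\mathscr C$, using difference quotients and linear Schauder theory together with Uhlenbeck's $C^{1,\alpha}$ regularity of $u$ itself. It then applies the linear maximum principle on $U_\varepsilon=U\setminus\overline{\mathscr C^\varepsilon}$. Finally it uses that $|\mathrm Du|<M=\max_{\overline U}|\mathrm Du|$ on a small neighbourhood of $\mathscr C$. Hence the maximum over $\partial U_\varepsilon$ cannot sit on the new boundary piece $U\cap\partial\mathscr C^\varepsilon$, and must be attained on $\partial U$.

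\emph{Your route.} You work with the non-degenerate approximants $u_\varepsilon$. For these, $v_\varepsilon$ satisfies $\mathrm{div}(a^\varepsilon\nabla v_\varepsilon)\ge0$ with
$a^\varepsilon=\frac12(\varepsilon+|\mathrm Du_\varepsilon|^2)^{\frac{p-2}{2}}\big(I+(p-2)\frac{\mathrm Du_\varepsilon^{\rm T}\mathrm Du_\varepsilon}{\varepsilon+|\mathrm Du_\varepsilon|^2}\big)$,
which is smooth and positive definite on $\overline U$. You then pass to the limit.

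\emph{Comparison.} Your approach treats critical points uniformly and avoids the excision step. The price is heavier imported results: interior smoothness of the regularised solutions, $C^{1,\alpha}$ estimates uniform in $\varepsilon$, and identification of the limit via uniqueness for the $p$-Dirichlet problem on $U'$. The paper's route is more self-contained.

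Two minor points on your formal computation. First, the cross term in your displayed flux should carry relative factor $1$, not $\tfrac12$; your stated eigenvalue bounds already correspond to the correct matrix. Second, the remainder $(p-2)|\mathrm Du|^{p-4}\sum_i(\mathrm Du:\mathrm D\partial_i u)^2+|\mathrm Du|^{p-2}|\mathrm D^2u|^2$ is nonnegative for every $p>1$ by Cauchy--Schwarz. So this is not really where $p\ge2$ is needed.
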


We note that the above maxima are actually well defined since $u\in C^{1,\alpha}_{\mathrm{loc}}(\Om,\R^N)$, for some $\alpha>0$, due to known regularity results for $p$-harmonic maps \cite{U}. The methods we utilise to establish Theorem \ref{thm p} are genuinely different from the ones we utilise to prove the previously stated results for supremal functionals, and are based on the theory of elliptic PDEs.

We note that, even if we restrict Theorem \ref{thm1} to the special case of $k=1$ and of $\mathrm H$ being the Euclidean norm, which reduces to the $\infty$-Dirichlet functional 
\[
\mathrm E_\infty(u,U)=\|\rom D u\|_{L^\infty(U)},  \ \ \ \ U\subseteq \R^n,
\]
the result still has a standalone merit, as it cannot be deduced from Theorem \ref{thm p} by passing to the limit as $p\to \infty$. Indeed, it is not known if the gradient of vector valued absolute minimisers of $\mathrm E_\infty$ can be approximated locally uniformly by the gradient of $p$-harmonic maps (and indeed this cannot be expected to be the case unless $\infty$-harmonic maps are $C^1$, which is known only in the scalar case and in two dimensions \cite{ES}). More generally, the lack of good approximation results for absolute minimisers via $p$-harmonic maps is one of the reasons why existence of absolute minimisers is still open in the vectorial case of $\min\{n,N\}\geq 2$. If however $n=1$ or $N=1$, then special ``scalar" methods can be applied (see e.g.\ \cite{BJW,AK,K6}).

Notwithstanding, unlike the case of general vectorial absolute minimisers, it is not difficult to establish existence results to the higher order Dirichlet problem for absolute minimisers with respect to compactly supported variations. The construction technique is based on the Dacorogna-Marcellini Baire Category method \cite{D-M}. However, there is no hope for a uniqueness result in this class of variations, as this method (which can be seen as an analytic alternative to Gromov's convex integration), always yields infinitely-many solutions. Accordingly, as a consequence of Theorem \ref{thm2}, we have the following result.

\begin{Corollary}[Existence and non-uniqueness in $\mathrm {AM}_c(\Om,\R^N)$]\label{cor}
Let $\Om\Subset\R^n$ be a bounded open set and let $\rom{H}:\Om\times\R^\Lambda\longrightarrow [0,\infty)$ be a continuous function satisfying \eqref{strong mon} and \eqref{uniformity}, with $\rom{H}(x,0)=0$. Assume also that $\rom H(x,X_0,\ldots,X_{k-1}, \cdot)$ is convex in $\R^{Nn^k}$. Consider the functional $\mathrm E_\infty$, defined in \eqref{E infty} and let $u_0\in W^{k,\infty}(\Om,\R^N)$ be fixed. 

Then, there exist infinitely many maps $u\in W^{k,\infty}_{u_0}(\Om,\R^N)\cap \mathrm{AM}_c(\Om,\R^N)$, which are  absolute minimisers with respect to compactly supported variations, satisfying $u=u_0$ on $\partial \Omega$.
\end{Corollary}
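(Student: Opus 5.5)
The plan is to derive the Corollary from Theorem \ref{thm2}. By that theorem it suffices to produce infinitely many maps $u\in W^{k,\infty}_{u_0}(\Om,\R^N)$ satisfying the energy maximum principle \eqref{MP}. I would obtain these from a Dacorogna--Marcellini differential inclusion whose solutions have constant energy. The guiding idea is that if $\rom H(x,\mathrm D^{\vec k}u(x))=\lambda$ for a.e.\ $x\in\Om$, then \eqref{MP} holds automatically. Indeed, for every open $U\subseteq\Om$ one then has $\esssup_U \rom H(\cdot,\mathrm D^{\vec k}u)=\lambda$. Moreover, the essential limsup \eqref{rep} equals $\lambda$ at every point of $\overline U$, in particular on $\partial U$, because each $B_\eps(x)\cap U$ has positive measure for $x\in\overline U$. (If $\partial U=\emptyset$, then $U=\Om$ and one uses $\partial\Om$ together with the same reasoning; here $\partial U$ is taken in $\R^n$.)

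The key steps are as follows.

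\textbf{Step 1 (choice of the energy level).} Fix any $\lambda>\esssup_\Om \rom H(\cdot,\mathrm D^{\vec k}u_0)$. I would first slightly modify $u_0$, keeping its $k$-jet on $\partial\Om$, so that it becomes piecewise polynomial of degree $k$ (piecewise affine in $\mathrm D^{k-1}$) with energy still strictly below $\lambda$. This uses the standard density of such maps in the Dacorogna--Marcellini framework for higher-order problems together with the uniform continuity \eqref{uniformity}; since $\rom H$ is continuous, small jet perturbations change the energy only slightly.

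\textbf{Step 2 (the differential inclusion).} Set
\[
E_x:=\{X:\rom H(x,X)=\lambda\},\qquad K_x:=\{X:\rom H(x,X)<\lambda\}.
\]
Because $\rom H(x,X_0,\ldots,X_{k-1},\cdot)$ is convex and, by \eqref{strong mon}, strictly radially increasing, the sublevel set in the top variable $X_k$ is convex and bounded. Its interior is $K_x$ (with the lower-order entries frozen), and its boundary is $E_x$. In particular $K_x$ lies in the rank-one (indeed full) convex hull of $E_x$, and boundedness follows from $c>0$. The higher-order Dacorogna--Marcellini theorem, which handles lower-order dependence through the relaxation property with $x$- and lower-order-continuous sets, then yields
\[
\rom H(x,\mathrm D^{\vec k}u)=\lambda\ \text{a.e.},\qquad u-u_0\in W^{k,\infty}_0(\Om,\R^N).
\]
I would verify the relaxation property by the usual approximation argument: oscillate $\mathrm D^k$ along a rank-one direction using compactly supported perturbations of $W^{k,\infty}$ type, while keeping the lower-order terms within a small error controlled by \eqref{uniformity}.

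\textbf{Step 3 (infinitely many solutions).} Different choices of $\lambda$ give distinct solutions, since their energies differ. Moreover, Baire genericity gives infinitely many solutions for each fixed $\lambda$. Applying the first paragraph together with Theorem \ref{thm2} completes the argument.

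The main obstacle is Step 2, specifically the lower-order dependence of $\rom H$ on $X_0,\ldots,X_{k-1}$ and on $x$. The sets $E_x$ move with the jet of $u$ itself, so a pure gradient-inclusion theorem does not apply directly. My first attempt would be to handle this by working with the open sets $\{\rom H<\lambda\}$ and obtaining small lower-order perturbations through the Poincaré-type smallness of compactly supported $W^{k,\infty}$ oscillations. The joint continuity assumed in the Corollary is what makes this stability work.
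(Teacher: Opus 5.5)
Your proposal follows the paper's proof. The paper fixes $\Lambda>\mathrm E_\infty(u_0,\Om)$ and invokes \cite[Proposition 1]{K5}, a consequence of the Dacorogna--Marcellini theory, to get infinitely many $u\in W^{k,\infty}_{u_0}(\Om,\R^N)$ with $\rom H(\cdot,\mathrm D^{\vec{k}}u)=\Lambda$ a.e.; it then notes that constant energy makes \eqref{MP} trivial and concludes by Theorem \ref{thm2}. Your Steps 1--2 therefore only sketch the cited existence result, and your remark that different levels $\lambda$ give distinct solutions is a clean way to get infinitely many without Baire genericity. The one flaw in that sketch is the justification of Step 1: for general $u_0\in W^{k,\infty}$ the top derivative $\mathrm D^ku_0$ is only $L^\infty$ and cannot be uniformly approximated by piecewise-constant ones, so ``small jet perturbations'' are not available; it is the convexity of $\rom H$ in $X_k$ (Jensen applied to averaged $k$-th derivatives) that accommodates $W^{k,\infty}$ data, which is why the quasiconvex variant mentioned in the paper needs piecewise $C^k$ boundary data.
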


The convexity assumption on $\rom H$ in the last variable can be relaxed to Morrey quasiconvexity, provided that the boundary data $u_0$ is piecewise $C^k$ (see \cite{K5}).
However, we underline that the maps $u$ constructed in Corollary \ref{cor} are not absolute minimisers in the sense of Definition \ref{AM def}. In fact, they are not global minimisers, since their energy is strictly greater than the minimal value of $\mathrm E_\infty(\cdot,\Om)$ in $W^{k,\infty}_{u_0}(\Om,\R^N)$. On the other hand, one may wonder if a global minimiser which satisfies the energy maximum principle (or, equivalently, if it is an absolute minimiser with respect to compactly supported variations), is an absolute minimiser. The answer to this question is negative, as the following example shows.

\begin{Example}
Let $n=N=1$ and $\Om=(-1,0)\cup(0,1)$. Consider the minimum problem for $$\mathrm E_\infty(u,U):=\|u'\|_{L^\infty(U)}, \quad U\subseteq\Om,$$ among functions $u\in W^{1,\infty}(U)$ with Dirichlet boundary conditions $$u(-1)=u(0)=0,\quad u(1)=1.$$
The McShane-Whitney extensions $u^{\pm}$ are both global minimisers and they satisfy the energy maximum principle in $\Om$, since $|u^\pm|=1$ a.e. in $\Om$. However, they differ from the absolute minimiser $u$, which clearly satisfies $u=0$ in $(-1,0)$. 
\end{Example}

\section{Preparatory tools}\label{prep}

\subsection{On radial monotonicity}\label{radial mon}

In this first preliminary section, we recall the notions of weak and strict radial monotonicity for a continuous function on $\R^\Lambda$, together with some properties and relevant examples.

\begin{Definition}\label{def rad}
\indent
A continuous function $h:\R^\Lambda\longrightarrow \R$ is said to be weakly (resp. strictly) radially increasing if for every $X\in\R^\Lambda\setminus\{0\}$ the function $t\mapsto h(tX)$ is weakly (resp. strictly) increasing in $[0,\infty)$.
\end{Definition}
Similarly to Example \ref{Ex} below, it can be shown that any level convex function with global minimum at the origin is weakly radially increasing. Of course, a weakly radially increasing function need not be level convex, since its level subsets only need to be star-shaped with respect to the origin.
\\
A continuous function which is strictly radially increasing possesses a modulus of monotonicity, as we can show in the following.

\begin{Proposition}\label{ex c}
Let $h:\R^\Lambda\longrightarrow\R$ be continuous. Then $h$ is strictly radially increasing if and only if there exists a continuous function $c:[0,1]\times[0,\infty)\longrightarrow [0,\infty)$, with $c(1,\cdot)=c(\cdot,0)=0$ and $c>0$ in $(0,1)\times(0,\infty)$, such that
$$
h(tX)\leq h(X)-c(t,|X|)\qquad\forall\  t\in[0,1]\quad\forall X\in\R^\Lambda.
$$ 
\end{Proposition}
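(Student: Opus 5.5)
The "if" direction is immediate. Suppose $c$ exists with the stated properties, and fix $X\neq 0$ and $0\le s<t$. If $t=0$ there is nothing to prove, so assume $t>0$ and apply the inequality to the vector $tX$ with parameter $s/t\in[0,1)$:
$$
h(sX)=h\big(\tfrac{s}{t}\,tX\big)\le h(tX)-c\big(\tfrac{s}{t},t|X|\big)<h(tX),
$$
because $c>0$ on $(0,1)\times(0,\infty)$. This requires $s/t\in(0,1)$, i.e.\ $s>0$. For $s=0$ we need $h(0)<h(tX)$. I would get this by picking any $0<s'<t$ and using the strict inequality just proved for $s'$ together with the bound $h(0)\le h(s'X)-c(0,s'|X|)\le h(s'X)$; the last step uses $c\ge0$. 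Hence $t\mapsto h(tX)$ is strictly increasing on $[0,\infty)$.

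For the "only if" direction, I will construct $c$ explicitly. Define
$$
c_0(t,r):=\min_{|X|=r}\big(h(X)-h(tX)\big),\qquad (t,r)\in[0,1]\times[0,\infty).
$$
The minimum exists because the sphere $\{|X|=r\}$ is compact and $h$ is continuous. Strict radial monotonicity gives $c_0(t,r)>0$ for $t\in[0,1)$ and $r>0$. We also have $c_0(1,r)=0$ and $c_0(t,0)=0$, and $c_0\ge 0$ everywhere. The inequality $h(tX)\le h(X)-c_0(t,|X|)$ holds by construction.

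The only remaining issue, and the main obstacle, is continuity of $c_0$. I would prove it by rewriting
$$
c_0(t,r)=\min_{|\theta|=1}\big(h(r\theta)-h(tr\theta)\big).
$$
This is a minimum over the fixed compact set $\{|\theta|=1\}$ of the function $(t,r,\theta)\mapsto h(r\theta)-h(tr\theta)$, which is continuous in all variables jointly. On each compact set $[0,1]\times[0,R]\times \mathbb S^{\Lambda-1}$ this function is uniformly continuous. A minimum over a compact parameter set of a uniformly continuous family is continuous in the remaining variables, so $c_0$ is continuous on $[0,1]\times[0,R]$ for every $R$, hence on all of $[0,1]\times[0,\infty)$. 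Taking $c:=c_0$ completes the proof.

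Note that $c_0$ vanishes at $t=1$, which is allowed. Positivity is only needed on $(0,1)\times(0,\infty)$, and there it is strict. At $t=0$ we even have $c_0(0,r)>0$ for $r>0$, which is harmless since no condition is imposed there.
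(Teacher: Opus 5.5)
Your proof is correct and follows the paper's route: you take $c(t,r)$ to be the minimum of $h(X)-h(tX)$ over the sphere $|X|=r$, and positivity then follows from compactness together with strict monotonicity. Beyond the paper, you also prove two things it only asserts: the continuity of $c$ (via uniform continuity of $(t,r,\theta)\mapsto h(r\theta)-h(tr\theta)$ on compact sets) and the endpoint case $s=0$ in the ``if'' direction.
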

\begin{proof}
Since the strict radial monotonicity for $h$ reads as
\begin{align}\label{eqv strict}
t\in(0,1),\, X\in\R^\Lambda\setminus\{0\}\Longrightarrow h(tX)<h(X),
\end{align}
it is evident that the existence of a function $c$ as above implies that $h$ is strictly radially increasing. On the other hand, if $h$ is strictly radially increasing, let us define $c:[0,1]\times[0,\infty)\longrightarrow [0,\infty)$ by
$$
c(t,s):=\inf\{h(X)-h(tX): |X|=s,\, X\in\R^\Lambda\}.
$$
Then, $c$ is continuous, $c(1,s)=0$ for all $s\in[0,\infty)$, and $c(t,0)=0$ for all $t\in[0,1]$. Moreover,   \eqref{eqv strict} and
the compactness of the $s$-spheres in $\R^\Lambda$ imply $c(t,s)>0$ for all $t\in (0,1)$ and $s>0$.
\end{proof}
We note that the existence of a modulus of monotonicity naturally leads to, and justifies, Definition \ref{strong mon}.
\begin{Example}\label{Ex}
Relevant examples of strictly radially increasing functions are:
\begin{itemize}
\item Any strictly level convex function with global minimum at the origin. Indeed, a function $h:\R^\Lambda\longrightarrow \R$ is said to be strictly level convex\footnote{Note that the notion of (strict) level convexity is called (strict) quasi-convexity in \cite{BJW}. } if
$$
h(tX+(1-t)Y)<\max\{h(X),h(Y)\}\qquad\forall X,Y\in\R^\Lambda: X\neq Y,\quad\forall t\in (0,1).
$$
By choosing $Y=0$, we obtain
$$
h(tX)<h(X)\qquad\forall X\in\R^\Lambda\setminus\{0\}\quad\forall t\in (0,1),
$$
which is equivalent to strict radial monotonicity (see \eqref{eqv strict}).
\item Any convex $C^1$-function $h$ with $h(0)=0<h(X)$ for all $X\neq0$. Indeed, by \cite[Thm 2.52]{Dac}, we have $\rom Dh(X)\cdot X\geq h(X)$ for all $X\in \R^\Lambda$, i.e.\ the radial speed of $h$ has a modulus of positivity given by $h$ itself. In particular, $h$ is weakly radially increasing. By integration, for $t\in[0,1]$ and $X\in \R^\Lambda$, we obtain
\begin{align*}
h(X)-h(tX)&=\int_t^{1}\rom Dh(sX)\cdot Xds\\
&\geq\int_t^1\frac1sh(sX)ds\\
&\geq({1-t})h(tX)\\
&\geq({1-t})\min_{|Y|=t|X|}h(Y)=:c(t,|X|).
\end{align*}
The function $c$ is a modulus of monotonicity in the sense of Proposition \ref{ex c}.
\item The cone function $h$ associated to a bounded open set $S\subseteq\R^\Lambda$ star shaped with respect to  the origin. This can be seen in a similar way to the previous example, by simply observing that since $h$ is $1$-homogeneous, one has $\frac{d}{dt}h(tX)=h(X)$ for all $X\in \R^\Lambda$.

\end{itemize}
\end{Example}

\subsection{Properties of the essential limsup}
In this second preliminary section, we give the definition of essential limsup, together with some properties.

\begin{Definition}[Essential limsup]\label{ess limsup}
Let $U\subseteq\R^n$ be a bounded Borel set and $f\in L^\infty(U)$. We say that the function $f^*\in L^\infty(U)$, defined as
$$
f^*(x):=\lim_{\eps\to  0}\esssup_{B_\eps(x)\cap U}f\qquad\forall\  x\in \overline U,
$$
is the essential limsup of $f$.
\end{Definition}
Clearly, if $U$ is open and $f$ is continuous in $U$, then $f^*=f$. 
However, in the more general setting of Definition \ref{ess limsup}, we may have $f\neq f^*$ on a set of positive measure: for instance, consider $f=\mathbbm{1}_{\R^n\setminus K}$ for a nowhere dense compact set $K\subseteq\R^n$, with $\mathscr L^n(K)>0$. In general, from \cite[Proposition 9]{KSIAM}, we have that $f\leq f^*$ almost everywhere and $f^*$ is upper semicontinuous on $U$. Moreover, again by \cite[Proposition 9]{KSIAM}, the definition of $f^*$ allows us to give a pointwise meaning to the essential supremum, indeed for any Borel set $U\subseteq\R^n$ we have
\begin{align}\label{pointwise sup}
\sup_U f^*=\esssup_U f.
\end{align}
For a set $E\subseteq\R^n$ and $\rho>0$, we denote by $E^\rho$ the open $\rho$-neighbourhood of $E$, namely 
\[
E^\rho:=\big\{ x\in\R^n: \mathrm{dist}(x, E)<\rho \big\}.
\]
The following lemma is in order.

\begin{Lemma}\label{lemma esssup}
Let $U\subseteq\R^n$ be a bounded open set. Then, for any $K\subseteq\partial U$ compact subset, we have
$$
\sup_K f^*=\lim_{\rho\to  0} \esssup_{K^\rho\cap U}f.
$$
\end{Lemma}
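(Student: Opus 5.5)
We prove the two inequalities separately. Throughout, $f\in L^\infty(U)$ and $f^*$ is defined on $\overline U$ as in Definition \ref{ess limsup}. The first observation is monotonicity: the map $\rho\mapsto \esssup_{K^\rho\cap U}f$ is non-decreasing in $\rho$, because $K^\rho$ grows with $\rho$. It is also bounded by $\|f\|_{L^\infty(U)}$. Hence the limit $L:=\lim_{\rho\to0}\esssup_{K^\rho\cap U}f$ exists and equals the infimum over $\rho>0$.

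\emph{Step 1: $\sup_K f^*\le L$.} Fix $x\in K$ and $\rho>0$. For every $\eps<\rho$ we have $B_\eps(x)\subseteq K^\rho$, so
\[
\esssup_{B_\eps(x)\cap U}f\le \esssup_{K^\rho\cap U}f .
\]
Letting $\eps\to0$ gives $f^*(x)\le \esssup_{K^\rho\cap U}f$. We then take the supremum over $x\in K$ and let $\rho\to0$.

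\emph{Step 2: $L\le \sup_K f^*$.} This is the main step, and it uses compactness of $K$. Set $s:=\sup_K f^*$ and fix $\delta>0$.
\begin{enumerate}
\item For each $x\in K$, the definition of $f^*(x)$ as a limit of quantities that decrease in $\eps$ gives a radius $\eps_x>0$ with $\esssup_{B_{2\eps_x}(x)\cap U}f\le f^*(x)+\delta\le s+\delta$.
\item The balls $B_{\eps_x}(x)$ cover $K$, so compactness yields a finite subcover $B_{\eps_{x_i}}(x_i)$, $i=1,\dots,m$.
\item Let $\rho:=\min_i\eps_{x_i}$. If $y\in K^\rho$, pick $z\in K$ with $|y-z|<\rho$, and pick $i$ with $z\in B_{\eps_{x_i}}(x_i)$. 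Then $|y-x_i|<2\eps_{x_i}$. Therefore $K^\rho\subseteq\bigcup_i B_{2\eps_{x_i}}(x_i)$.
\item An essential supremum over a finite union of sets is the maximum of the essential suprema over the pieces. Hence $\esssup_{K^\rho\cap U}f\le s+\delta$.
\end{enumerate}
By monotonicity, $L\le s+\delta$. Since $\delta>0$ was arbitrary, $L\le s$.

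The only subtle point is item 3 of Step 2: a tubular neighbourhood of $K$ must sit inside a finite union of enlarged balls, and this is exactly why the balls are doubled. Boundedness of $U$ and the Borel structure are used only to make the essential suprema well defined and finite. The assumption $K\subseteq\partial U$ plays no real role, beyond guaranteeing that $f^*$ is defined on $K\subseteq\overline U$.
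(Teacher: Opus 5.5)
Your proof is correct and follows the paper's argument: the same inclusion $B_\eps(x)\subseteq K^\rho$ gives the first inequality, and for the second both you and the paper take a finite subcover of $K$ by balls on which $\esssup f$ is within $\delta$ of $f^*$ and then bound $\esssup_{K^\rho\cap U}f$ by the maximum over the pieces. The only difference is cosmetic. You get $K^\rho\subseteq\bigcup_i B_{2\eps_{x_i}}(x_i)$ directly by doubling the radii and taking $\rho=\min_i\eps_{x_i}$, whereas the paper proves that some $K^{\rho_\sigma}$ lies inside the union of the covering balls by a sequential-compactness contradiction argument.
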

\begin{proof}
We start with noting that for every $x\in K$ there exists $\rho>0$ such that $B_\rho(x)\cap U\subseteq K^\rho\cap U$, such that
$$
\esssup_{B_\rho(x)\cap U}f\leq \esssup_{K^\rho\cap U}f.
$$
By passing to the limit as $\rho\to  0$ in both sides and taking the supremum over $x\in K$ in the left hand side, we obtain
$$
\sup_K f^*\leq\lim_{\rho\to  0} \esssup_{K^\rho\cap U}f.
$$
It remains to prove the opposite inequality. By definition \ref{ess limsup}, for any fixed $x\in K$ and $\sigma>0$, there exists $\rho_{x,\sigma}>0$ such that 
\begin{align}\label{def of lim}
f^*(x)\geq\esssup_{B_{\rho_{x,\sigma}}(x)\cap U}f-\sigma.
\end{align}
The family of balls $\{B_{\rho_{x,\sigma}}(x); \,x\in K\}$ is an open covering of $K$. Thus, we can extract a finite subcovering $\{B_{\rho_{x_i,\sigma}}(x_i); \,x_i\in K\} _{i=1,\ldots,m}$.
For brevity, let us denote $B_{i,\sigma}=B_{\rho_{x_i,\sigma}}(x_i)$. We claim that there exists $\rho_\sigma>0$ such that $K^{\rho_\sigma}\subseteq \bigcup_{i=1}^m B_{i,\sigma}$. For the sake of contradiction, suppose that for every $j\in\N$ there exists $y_j\in K^{1/j}\setminus\bigcup_{i=1}^m B_{i,\sigma}$. This is equivalent to  
\begin{equation}\label{contr}
\forall\  j\in\N, \quad\exists\, y_j\in\R^n:\left\{
\begin{aligned}
 &y_j\notin B_{i,\sigma}\quad\forall\  i=1,\ldots,m,\\
&\mathrm{dist}(y_j,K)\leq 1/j.
\end{aligned}
\right.
\end{equation}
 Let $\bar y_j\in K$ be a point realising $d(y_j,K)$. Since $K$ is compact, there exists a subsequence $(\bar y_{j_k})\subseteq(\bar y_j)$ such that $\bar y_{j_k}\longrightarrow  \bar x$ as $k\to  \infty$, for some $\bar x\in K$. Moreover, since $\mathrm{dist}(y_{j_k},\bar y_{j_k})\leq1/{j_k}$, we have also that $y_{j_k}\longrightarrow  \bar x$ as $k\to  \infty$. But being $\{B_{i,\sigma}\}_{i=1,\ldots,m}$ a covering of $K$, we must have that eventually $y_{j_k}\in B_{i,\sigma}$ for some $i\in\{1,\ldots,m\}$, which contradicts \eqref{contr}. Now, we define $U_\sigma:=\bigcup_{i=1}^m (B_{i,\sigma}\cap U)$ and note that ${K^{\rho_\sigma}\cap U}\subseteq U_\sigma$. Thus, recalling \eqref{def of lim}, we estimate
\begin{align*}
\lim_{\rho\to  0}\esssup_{K^\rho\cap U}f&\leq\esssup_{K^{\rho_\sigma}\cap U}f\\
&\leq \esssup_{U_\sigma}f\\
&\leq\max_{i=1,\ldots,m}\esssup_{B_{i,\sigma}\cap U}f\\
&\leq\max_{i=1,\ldots,m} f^*(x_i)+\sigma\\
&\leq\sup_K f^*+\sigma.
\end{align*}
By letting $\sigma\to 0$, we conclude the proof.
\end{proof}

\section{Proofs of the main results}\label{sec:proofs}

This section is devoted to the proof of Theorems \ref{thm1} and \ref{thm2}. The proof of the latter is essentially a consequence of the arguments in the proof of the former. Indeed, the key idea in the proof of Theorem \ref{thm1} is to find, for every $U\subseteq\Om$, a suitable competitor in $u_\lambda\in W^{k,\infty}_u(U,\R^N)$ to compare with the absolute minimiser $u$. It turns out that $u_\lambda\in u+W^{k,\infty}_c(U,\R^N)$, and this will be the crucial observation in the proof of Theorem \ref{thm2}.

We start with the proof of Theorem \ref{thm1}.

\medskip

\noindent \textit{Proof of Theorem \ref{thm1}}: Fix $U\subseteq\Om$ an open subset. First, note that for every $x\in\partial U$ and every $\rho>0$, we clearly have
$$
\esssup_{B_\rho(x)\cap U}\rom{H}\big (\cdot,\mathrm D^{\vec{k}}u \big)\leq \esssup_{ U}\rom{H}\big (\cdot,\mathrm D^{\vec{k}}u \big).
$$ 
From Definition \ref{ess limsup}, by passing to the limit as $\rho\to  0$ and then to the supremum over all $x\in\partial U$ in the left hand side, we obtain
\begin{align}\label{one ineq}
\sup_{\partial U}\rom{H} \big (\cdot,\mathrm D^{\vec{k}}u \big)^*\leq\esssup_U \rom{H}\big (\cdot,\mathrm D^{\vec{k}}u \big).
\end{align}
Let us prove the opposite inequality. To this aim we need to use assumptions \eqref{strong mon} and \eqref{uniformity}.\\
We start by setting 
\begin{align}\label{M}
M:=\esssup_U \rom{H}\big (\cdot,\mathrm D^{\vec{k}}u \big).
\end{align}
If $M=0$, there is nothing to prove, so we can assume that $M>0$.
For the sake of contradiction, assume that \eqref{one ineq} holds strictly. Then, by applying Lemma \ref{lemma esssup} to $K=\partial U$, one can find $\delta>0$ such that
$$
\lim_{\rho\to  0}\esssup_{(\partial U)^\rho\cap U}\rom{H}\big (\cdot,\mathrm D^{\vec{k}}u \big)+2\delta\leq M.
$$
In particular, there exists $\eps>0$ such that
\begin{align}\label{from contr}
\esssup_{(\partial U)^{\eps}\cap U}\rom{H}\big (\cdot,\mathrm D^{\vec{k}}u \big)+\delta\leq M.
\end{align}
By possibly recreasing $\eps$ if necessary, we can choose a function $\varphi_\eps\in C^\infty(\R^n)$, with $0\leq\varphi_\eps\leq1$, such that $\varphi_\eps=0$ in $U\setminus (\partial U)^\eps$ and $\varphi_\eps=1$ in $(\partial U)^{\eps/2}$. Now, for $\lambda\in(0,1)$, set
$$
u_\lambda:=\lambda u + (1-\lambda)\varphi_\eps u.
$$
note that $u_\lambda\in W^{k,\infty}{(U,\R^N)}$. Moreover, $u_\lambda=u$ on $(\partial U)^{\eps/2}\cap U$, so that 
\begin{align}\label{u lambda}
u_\lambda\in u+W^{k,\infty}_c(U,\R^N)\subseteq W_u^{k,\infty}(U,\R^N).
\end{align}
Let us compute $\mathrm D^{\vec{k}} u_\lambda$.  For every $h=1,\ldots, k$, we have
\begin{align}\label{deriv product}
\mathrm D^hu_\lambda=\lambda \mathrm D^h u+(1-\lambda)\mathrm D^h(\varphi_\eps u).
\end{align}
By the Leibniz formula,
\begin{align*}
\partial ^h_{i_1,\ldots,i_h}(\varphi_\eps u)=\sum_{j=0}^h \binom{h}{j}(\partial^{h-j}_{i_{j+1},\ldots, i_h} \varphi_\eps)(\partial^{j}_{i_{1},\ldots, i_j} u).
\end{align*}
In particular, for every $h=0,\ldots, k$, we can estimate
$$
|\mathrm D^h(\varphi_\eps u)|\leq 2^h\|\varphi_\eps\|_{W^{h,\infty}(U)}\|u\|_{W^{h,\infty}(U,\R^N)}\qquad \mbox{in }U, 
$$
which gives
\begin{align}\label{crude est}
\big| \mathrm D^{\vec{k}}(\varphi_\eps u) \big|\leq 2^{k+1}\|\varphi_\eps\|_{W^{k,\infty}(U)}\|u\|_{W^{k,\infty}(U,\R^N)}\qquad \mbox{in }U.
\end{align}
Putting together \eqref{deriv product} and \eqref{crude est}, we obtain
\begin{align}\label{fund est}
\big|  \mathrm D^{\vec{k}}u_\lambda-\lambda \mathrm D^{\vec{k}}u \big|  \leq(1-\lambda)2^{k+1}\|\varphi_\eps\|_{W^{k,\infty}(U)}\|u\|_{W^{k,\infty}(U,\R^N)}\qquad\mbox{in }U.
\end{align}
In particular, if we set 
$$C_1=C_1(\eps,k,\lambda)=(1-\lambda)2^{k+1}\|\varphi_\eps\|_{W^{k,\infty}(U)}+\lambda,$$
then
$$
\big|  \mathrm D^{\vec{k}}u_\lambda \big|  \leq C_1\|u\|_{W^{k,\infty}(U,\R^N)}\qquad\mbox{in }U.
$$
Now, we recall assumption \eqref{uniformity} and apply it to 
$$
R:=C_1\|u\|_{W^{k,\infty}(U,\R^N)}, \qquad\mathrm D^{\vec{k}}u_\lambda(x),\,\lambda\mathrm D^{\vec{k}}u(x)\in \overline{\mathbb B}_R(0)\quad\mbox{for a.e.   }x\in U,
$$
obtaining the existence of a continuous function $\omega=\omega_R:(0,\infty)\longrightarrow (0,\infty)$, non-decreasing, with $\omega(0^+)=0$, such that
\begin{align*}
\Big| \rom{H}(x,\mathrm D^{\vec{k}}u_\lambda(x))-\rom{H}(x,\lambda\mathrm D^{\vec{k}}u(x))\Big| \leq \omega\big(\big|\mathrm D^{\vec{k}}u_\lambda(x)-\lambda\mathrm D^{\vec{k}}u(x) \big|\big)\qquad\mbox{ a.e. }x\in U.
\end{align*} 
From \eqref{fund est} and the monotonicity of $\omega$, setting
$$
C_2=C_2(\eps,k)=2^{k+1}\|\varphi_\eps\|_{W^{k,\infty}(U)}\|u\|_{W^{k,\infty}(U,\R^N)},
$$
we get
\begin{align*}
\Big| \rom{H}(\cdot,\mathrm D^{\vec{k}}u_\lambda)-\rom{H}(\cdot,\lambda\mathrm D^{\vec{k}}u) \Big|\leq \omega\left((1-\lambda)C_2\right)\qquad\mbox{ a.e. in } U.
\end{align*}
By restricting this estimate to $(\partial U)^\eps\cap U$ and passing to the essential supremum on this set, we have
\begin{align*}
\esssup_{(\partial U)^\eps\cap U}\rom{H}(\cdot, \mathrm D^{\vec{k}}u_\lambda)&\leq\esssup_{(\partial U)^\eps\cap U}\rom{H}(\cdot, \lambda\mathrm D^{\vec{k}}u)+ \omega\left((1-\lambda)C_2\right)\\
&\leq M-\delta +\omega\left((1-\lambda)C_2\right),
\end{align*}
where in the last inequality we have applied \eqref{from contr}. Now, for $\lambda$ sufficiently close to $1$, by continuity of $\omega$, we have $\omega\left((1-\lambda)C_2\right)\leq\delta/2$, giving
\begin{align}\label{first key}
\esssup_{(\partial U)^\eps\cap U}\rom{H}(\cdot, \mathrm D^{\vec{k}}u_\lambda)\leq M-\frac{\delta}{2}<M.
\end{align}
On the other hand, let us prove that
\begin{align}\label{second key}
\esssup_{U\setminus (\partial U)^\eps}\rom{H}(\cdot, \mathrm D^{\vec{k}}u_\lambda)<M.
\end{align}
To this end note that, since $\varphi_\eps=0$ in $U\setminus (\partial U)^\eps$, we have $u_\lambda=\lambda u$ in $U\setminus (\partial U)^\eps$. Thus, we set 
$$
M_\lambda:=\esssup_U \rom{H}(\cdot, \lambda \mathrm D^{\vec{k}}u)
$$ 
and aim at proving $M_\lambda<M$. Without loss of generality, we can assume $M_\lambda>0$. So, by the definition of the essential supremum, for every $\delta\in (0,M_\lambda/2)$, there exists $x_\delta\in U$ such that
\begin{align}\label{def ess}
 \rom{H}\big(x_\delta,\lambda \mathrm{D}^{\vec{k}}u(x_\delta)\big)\geq M_\lambda-\delta.
\end{align}
On the other hand, by recalling assumption \eqref{strong mon} and \eqref{M}, we have
\begin{align}
 \rom{H}\big(x_\delta,\lambda \mathrm{D}^{\vec{k}}u(x_\delta)\big)
 &\leq \rom{H}(x_\delta, \mathrm{D}^{\vec{k}}u(x_\delta))-c(\lambda, |\mathrm D^{\vec{k}}u(x_\delta)|)\nonumber\\
&\leq M-c(\lambda, |\mathrm D^{\vec{k}}u(x_\delta)|).\label{from strong}
\end{align}
Now we claim that there exists $\sigma>0$ independent of $\delta$ such that
\begin{align}\label{c}
c(\lambda,|\mathrm D^{\vec{k}}u(x_\delta)|)\geq\sigma.
\end{align}
Indeed, from \eqref{def ess} and the continuity of $\rom{H}$ at $X=0$, we infer that $|\mathrm D^{\vec{k}}u(x_\delta)|$ is bounded away from $0$. Moreover, since $u\in W^{k,\infty}(\Om,\R^N)$, we have that $|\mathrm D^{\vec{k}}u(x_\delta)|$ is also bounded from above. In other words, there exist two constants $R\geq r>0$, independent of $\delta$, such that
$$
r\leq| \mathrm D^{\vec{k}}u(x_\delta)|\leq R.
$$
Hence, since $c$ is continuous and strictly positive in $(0,1)\times(0,\infty)$, we have $$c(\lambda,| \mathrm D^{\vec{k}}u(x_\delta)|)\geq\min_{\rho\in[r,R]}c(\lambda,\rho)=:\sigma>0.$$
Putting together \eqref{def ess}, \eqref{from strong}, and \eqref{c}, we obtain
$$
M_\lambda-\delta\leq M-c(\lambda, |\mathrm D^{\vec{k}}u(x_\delta)|)\leq M-\sigma,
$$
for $\sigma>0$ independent of $\delta$. We conclude that $M_\lambda\leq M-\sigma<M$, by letting $\delta\to  0$.\\
As a consequence, we deduce \eqref{second key}, indeed
\begin{align*}
\esssup_{U\setminus (\partial U)^\eps}\rom{H}(\cdot, \mathrm D^{\vec{k}}u_\lambda)&=\esssup_{U\setminus (\partial U)^\eps}\rom{H}(\cdot, \lambda\mathrm D^{\vec{k}}u)\\
&\leq \esssup_{U}\rom{H}(\cdot, \lambda\mathrm D^{\vec{k}}u)\\
&=M_\lambda<M
\end{align*}
Finally, by recalling \eqref{M}, and the inequalities \eqref{first key} and \eqref{second key}, we obtain
$$
\esssup_{U}\rom{H}(\cdot, \mathrm D^{\vec{k}}u_\lambda)\leq \max\left\{\esssup_{(\partial U)^\eps\cap U}\rom{H}(\cdot, \mathrm D^{\vec{k}}u_\lambda), \esssup_{U\setminus (\partial U)^\eps}\rom{H}(\cdot, \mathrm D^{\vec{k}}u_\lambda)\right\}<M.
$$
This inequality and \eqref{u lambda} contradict the absolute minimality of $u$.\\
The proof is complete. \qed
\medskip

A straightforward consequence is Theorem \ref{thm2}, whose proof is detailed below.

\medskip

\noindent \textit{Proof of Theorem \ref{thm2}:} Let us show that satisfying the energy maximum principle implies the membership to the class $ \mathrm {AM}_c(\Om,\R^N)$. Fix $u\in W^{k,\infty}(\Om,\R^N)$, an open set $U\subseteq\Om$ and a map $\phi\in W^{k,\infty}_c(U,\R^N)$. Let $U'\Subset U$ be an open subset with supp$(\phi)\subseteq \overline{U'}$. Then we have
\begin{equation}\label{1st ine}
\begin{aligned}
\mathrm E_\infty(u+\phi,U)&=\esssup_U \rom{H}(\cdot, \mathrm D^{\vec{k}}(u+\phi))\\
&\geq \esssup_{U\setminus U'}\rom{H}(\cdot, \mathrm D^{\vec{k}}(u+\phi))\\
&=\esssup_{U\setminus U'}\rom{H}(\cdot, \mathrm D^{\vec{k}}u)
\end{aligned}
\end{equation}
note that $(\partial U)^\rho\cap U\subseteq U\setminus U'$ for a sufficiently small $\rho>0$. Hence
$$
\esssup_{U\setminus U'}\rom{H}(\cdot, \mathrm D^{\vec{k}}u)\geq \esssup_{(\partial U)^\rho\cap U}\rom{H}(\cdot, \mathrm D^{\vec{k}}u).
$$
By passing to the limit as $\rho\to   0$ in the right hand side and invoking Lemma \ref{lemma esssup}, we get
\begin{equation}\label{2nd ine}
\esssup_{U\setminus U'}\rom{H}(\cdot, \mathrm D^{\vec{k}}u)\geq \lim_{\rho\to  0}\esssup_{(\partial U)^\rho\cap U}\rom{H}(\cdot, \mathrm D^{\vec{k}}u)=\sup_{\partial U}\rom{H}(\cdot, \mathrm D^{\vec{k}}u)^*.
\end{equation}
Thus, if $u$ satisfies the energy maximum principle \eqref{MP}, by putting together \eqref{1st ine} and \eqref{2nd ine}, we get
\begin{align*}
\mathrm E_\infty(u+\phi,U)&\geq\sup_{\partial U}\rom{H}(\cdot, \mathrm D^{\vec{k}}u)^*\\
&=\esssup_{U}\rom{H}(\cdot, \mathrm D^{\vec{k}}u)\\
&=\mathrm E_\infty(u,U),
\end{align*}
yielding that $u\in \mathrm{AM}_c(\Om,\R^N)$.\\
For the reverse implication, one can just replicate the arguments in the proof of Theorem \ref{thm1}. Indeed, inequality \eqref{one ineq} holds for every $u\in W^{k,\infty}(\Om,\R^N)$. The opposite inequality is obtained exactly with the same reasoning, since the competitor $u_\lambda$ belongs to the class $u+W^{k,\infty}_c(U,\R^N)$ (recall \eqref{u lambda}). \qed
\medskip

As a consequence of Theorem \ref{thm2}, we obtain Corollary \ref{cor}.
\medskip

\noindent {\it Proof of Corollary} \ref{cor}: For $\Lambda>\mathrm E_\infty(u_0,\Om)$, consider the Dirichlet problem
\begin{equation}\nonumber
\left\{
\begin{aligned}
&\rom H(\cdot, \rom D^{\vec{k}}u)=\Lambda, &\text{ a.e. in }\Om,
\\
&u=u_0, & \text{ on }\partial \Omega.
\end{aligned}
\right.
\end{equation}
By \cite[Proposition 1]{K5} (which is consequence of results from \cite{D-M}), it admits infinitely many strong solutions $u\in W^{k,\infty}_{u_0}(\Om,\R^N)$. Each one of them trivially satisfies the energy maximum principle for $\mathrm E_\infty$ in $\Om$:
$$
\esssup_U \rom{H}\big (\cdot,\mathrm D^{\vec{k}}u \big)=\sup_{\partial U}\rom{H}(\cdot,\mathrm D ^{\vec{k}}u)^*=\Lambda\qquad\forall\  U\subseteq\Om \mbox{ open}.
$$
By Corollary \ref{thm2}, we have also that $u\in \mathrm{AM}_c(\Om,\R^N)$. \qed

\medskip

Now we turn our attention to Theorem \ref{thm p} concering the case of $p<\infty$. The content of the following lemma is probably well known, but we present its proof for sake of completeness.

\begin{Lemma}\label{smoothness}
Let $\Om\subseteq\R^n$ be an open set and $u:\Om\longrightarrow \R^N$ be a $p$-harmonic map, $p\in[2,\infty)$. Set $\mathscr{C}:=\{x\in\Om:\mathrm Du(x)=0\}$. Then $u\in C^\infty(\Omega\setminus\mathscr C, \R^N)$.
\end{Lemma}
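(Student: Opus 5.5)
We will rely on the known $C^{1,\alpha}$ regularity of $p$-harmonic maps from \cite{U} and then bootstrap on the open set $\Om\setminus\mathscr C$ via linear elliptic theory.

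\emph{Step 1 (openness and local nondegeneracy).} By \cite{U}, $u\in C^{1,\alpha}_{\mathrm{loc}}(\Om,\R^N)$. Hence $\mathrm Du$ is continuous, so $\mathscr C$ is relatively closed and $\Om\setminus\mathscr C$ is open. Fix $x_0\in\Om\setminus\mathscr C$. There is a ball $B=B_r(x_0)\Subset\Om\setminus\mathscr C$ and constants $0<m\le L$ with $m\le|\mathrm Du|\le L$ on $\overline B$. Smoothness is local, so it suffices to prove $u\in C^\infty(B)$.

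\emph{Step 2 (differentiated equation).} Set $a(P):=|P|^{p-2}P$ for $P\in\R^{N\times n}$. This map is real-analytic on $\{P\neq0\}$, and there its derivative
\[
\partial_P a(P)=|P|^{p-2}\Big(\mathrm{Id}+(p-2)\frac{P\otimes P}{|P|^2}\Big)
\]
satisfies the Legendre condition: $\partial_Pa(P)\xi:\xi\ge |P|^{p-2}|\xi|^2$, using $p\ge2$. On $B$ the matrix $\mathrm Du$ stays in a compact subset of $\{m\le|P|\le L\}$. So the system $\mathrm{div}\,a(\mathrm Du)=0$ is uniformly elliptic on $B$, with ellipticity constants $m^{p-2}$ and $(p-1)L^{p-2}$.

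To get $u\in W^{2,2}_{\mathrm{loc}}(B)$, I would use the difference-quotient method. For $\Delta_h u$, test the weak equation with $\eta^2\Delta_h u$, where $\eta$ is a cutoff, and write
\[
a(\mathrm Du(x+he_s))-a(\mathrm Du(x))=A_h(x)\,\Delta_h\mathrm Du\,h,\qquad A_h=\int_0^1\partial_Pa\big(\mathrm Du+t(\mathrm Du(\cdot+he_s)-\mathrm Du)\big)\,dt .
\]
Since $\mathrm Du$ is uniformly continuous, for small $|h|$ the segment stays in $\{|P|\ge m/2\}$, so $A_h$ is uniformly elliptic and bounded. The Caccioppoli argument then gives uniform bounds on $\Delta_h\mathrm Du$ in $L^2_{\mathrm{loc}}$. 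Consequently $v=\partial_s u$ solves the linear system $\mathrm{div}\big(A(x)\mathrm Dv\big)=0$ with $A=\partial_Pa(\mathrm Du)\in C^{0,\alpha}$, uniformly elliptic.

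\emph{Step 3 (bootstrap).} Schauder theory for linear elliptic systems with Hölder coefficients in divergence form (Legendre ellipticity suffices) gives $v\in C^{1,\alpha}_{\mathrm{loc}}$, so $u\in C^{2,\alpha}_{\mathrm{loc}}(B)$. Then $A=\partial_Pa(\mathrm Du)\in C^{1,\alpha}$, since $\partial_Pa$ is smooth away from $0$. This yields $v\in C^{2,\alpha}$, i.e.\ $u\in C^{3,\alpha}$, and by induction $u\in C^{k,\alpha}_{\mathrm{loc}}(B)$ for all $k$.

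The main obstacle is Step 2: justifying the differentiated system rigorously at the $W^{1,p}$/$C^{1,\alpha}$ level, in particular controlling the averaged coefficient $A_h$ away from the degeneracy. The continuity of $\mathrm Du$ from \cite{U} is exactly what resolves this. The rest is the standard Schauder bootstrap for systems.
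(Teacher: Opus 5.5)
Your proposal is correct and follows essentially the same route as the paper: $C^{1,\alpha}$ regularity from Uhlenbeck, nondegeneracy of $\mathrm Du$ on $\Om\setminus\mathscr C$, and difference quotients with the averaged coefficient $\int_0^1\partial_Pa(\cdots)\,dt$, which is uniformly Legendre elliptic by continuity of $\mathrm Du$, followed by Schauder estimates and a bootstrap. The one difference is that you first obtain $W^{2,2}_{\mathrm{loc}}$ via Caccioppoli and pass to the exact linearised system for $\partial_s u$ with coefficient $\partial_Pa(\mathrm Du)$ before invoking Schauder, whereas the paper applies Schauder directly to the difference-quotient system (implicitly relying on H\"older bounds on the averaged coefficient that are uniform in $h$); your ordering makes that limiting step more explicit.
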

\begin{proof}
Define $F:\R^{N\times n}\longrightarrow  \R^{N\times n}$ as $F(X):=|X|^{p-2}X$. Then, we have
$$
\rom DF(X)=(p-2)|X|^{p-4}X\otimes X+|X|^{p-2}I,
$$
where $I$ is the identity map in $(\R^{N\times n})^{\otimes 2}$.
Using the following identity
$$
(X\otimes X)Y:Z=(X:Y)(X:Z)\qquad\forall\  X,Y,Z\in \R^{N\times n},
$$ 
we have that
$$
\rom DF(X) Y:Z=Y:\rom DF(X)Z\qquad\forall\  X,Y,Z\in \R^{N\times n},
$$
i.e.\ $\rom DF(X)$ is a symmetric $4$-tensor.
Moreover, for every $X,Y\in \R^{N\times n}$, we have
$$
|X|^{p-2}|Y|^2\leq \rom DF(X)Y:Y=(p-2)|X|^{p-4}(X:Y)^2+|X|^{p-2}|Y|^2\leq(p-1)|X|^{p-2}|Y|^2.
$$
In particular, if we assume $0<\lambda\leq|X|\leq\Lambda$, then $DF(X)$ is elliptic in the Legendre sense (see \cite{GM}), since
\begin{align}\label{ellip}
\lambda^{p-2}|Y|^2\leq \rom DF(X)Y:Y\leq(p-1)\Lambda^{p-2}|Y|^2.
\end{align}
Now observe that $u$ solves (distributionally)
\begin{align}\label{eq F}
\mathrm{div}(F(\rom Du))=0 \qquad\mbox{in }\Om.
\end{align}
For every fixed $U'\Subset U\Subset\Omega\setminus\mathscr C$ and $i\in\N$, the difference quotient of a function $f:U\longrightarrow  \R$
$$
\rom D_h^if(x):=\frac{f(x+he_i)-f(x)}{h}
$$
is well defined on $U'$, for every $h\in \R$ with $0<|h|<{\rm dist}(U',\partial U)$. By restricting \eqref{eq F} to $U$, we can apply the difference quotient and obtain
$$
\mathrm{div}(\rom D_h^i(F(\rom Du)))=0 \qquad\mbox{in }U'.
$$
For $x\in U'$, we write
\begin{align*}
\rom D_h^i(F(\rom Du))&=\frac{1}{h}\int_0^1\frac{d}{dt} \left[F\left(\rom Du(x)+th\rom D_h^i\rom Du(x)\right)\right]dt\\
&=\left(\int_0^1\rom DF\left(\rom Du(x)+th\rom D_h^i\rom Du(x)\right)dt\right) \rom D_h^i \rom Du(x).
\end{align*}
Observe that
$$
\mathbb A(x):=\int_0^1\rom DF\left(\rom Du(x)+th\rom D_h^i\rom Du(x)\right)dt
$$
is a symmetric $4$-tensor field on $U'$. Let us prove that it is Legendre elliptic: first note that there exists $\lambda>0$ such that $|\rom Du|\geq\lambda$ in $U'$. In addition, since $u\in C^{1,\alpha}(U,\R^N)$ (see \cite{U}), for $|h|$ sufficiently small we have
$$
|\rom Du(x+he_i)-\rom Du(x)|\leq\frac{\lambda}{2}\qquad\forall\  x\in U'.
$$
Thus, for $t\in[0,1]$, we estimate
$$
\left|\rom Du(x)+th\rom D_h^i\rom Du(x)\right|=\left|\rom Du(x)+t\left(\rom Du(x+he_i)-\rom Du(x)\right)\right|\geq\lambda-t\frac{\lambda}2\geq\frac{\lambda}2\qquad\forall\  x\in U'.
$$
On the other hand, 
$$
\left|\rom Du(x)+th\rom D_h^i\rom Du(x)\right|\leq 3\|\rom Du\|_{L^\infty(U)}\qquad\forall\  x\in U'.
$$
From \eqref{ellip}, we deduce the Legendre ellipticity of $\mathbb A$ in $U'$ (with ellipticity constants independent of $h$).\\
Therefore, we have proved that $w:=\rom D_h^i \rom Du$ solves the linear elliptic system
$$
{\rm div}(\mathbb Aw)=0\qquad\mbox{in } U'.
$$
By Schauder estimates \cite[Thm. 5.19]{GM}, since $\mathbb A$ has $\alpha$-H\"older coefficients (recall $u\in C^{1,\alpha}(U,\R^N)$), we have $w\in C^{1,\alpha}(U', \R^{N\times n})$. The properties of the difference quotient ensure $\rom D u\in C^{2,\alpha}(U',\R^{N\times n})$. By a bootstrap argument and arbitrariness of $U'$, we can conclude that $u\in C^\infty(\Om\setminus\mathscr C, \R^N)$.
\end{proof}

Now, we are ready to prove Theorem \ref{thm p}.

\medskip

\noindent \textit{Proof of Theorem \ref{thm p}:} First, we observe that if $p=2$, the result is straightforward: every harmonic map $u:\Om\longrightarrow \R^N$ is smooth and satisfies $\Delta|\rom Du|^2\geq0$ on $\Om$.
So, we may assume $p>2$ in the following. \\
We divide the proof in two steps: firstly, we prove the gradient maximum principle outside the closed set $\mathscr{C}=\{x\in\Om:\mathrm Du(x)=0\}$; second, we show the argument for a generic open set $U\Subset\Om$.\\
Of course, we can assume that $\Om\setminus\mathscr C\neq \varnothing$, otherwise $u$ is constant on $\Om$.

\medskip

\noindent \textit{Step 1:} assume $U\Subset\Om\setminus\mathscr{C}$.\\
 In the following, the symbols $\cdot$ and $ \langle\cdot,\cdot\rangle$ stand for the inner products in $\R^n$ and $\R^N$ respectively, while : stands for the Hilbert-Schmidt inner product in $\R^{N\times n}$. By Lemma \ref{smoothness}, we have $u\in C^\infty(\overline U,\R^N)$\footnote{By this notation, we mean that $u\in C^\infty(U',\R^N)$ for some $U\Subset U'\subseteq \Om\setminus\mathscr C$.}. So, we can differentiate \eqref{p eq} with respect to $x_i$, obtaining
\begin{align*}
0&=\partial_i\left[\rom{div}\big(|\rom Du|^{{p-2}}\rom Du\big)\right]=\rom{div}\left((p-2)|\rom Du|^{{p-4}}(\rom Du:\rom D\partial_i u)\rom Du+|\rom Du|^{{p-2}}\rom D\partial_iu\right)\qquad\mbox{in }U.
\end{align*}
By taking the inner product with $\partial_i u$, we have
\begin{equation}\label{inner}
\begin{aligned}
0&=\left\langle\rom{div}\left((p-2)|\rom Du|^{p-4}(\rom Du:\rom D\partial_i u)\rom Du+|\rom Du|^{{p-2}}\rom D\partial_i u\right),\partial_i u\right\rangle\\
&=\rom{div}\left((p-2)|\rom Du|^{p-4}(\rom Du:\rom D\partial_i u)\langle\rom Du,\partial_iu\rangle+|\rom Du|^{p-2}\langle\rom D\partial _iu,\partial_iu\rangle\right)-\\
&\quad-(p-2)|\rom Du|^{p-4}(\rom Du:\rom D\partial_i u)^2-|\rom Du|^{p-2}|\rom D\partial_iu|^2, \qquad\mbox{in }U.
\end{aligned}
\end{equation}
From \eqref{inner}, we deduce
\begin{equation}\label{inner2}
\begin{aligned}
\rom{div}\left((p-2)|\rom Du|^{p-4}(\rom Du:\rom D\partial_i u)\langle\rom Du,\partial_iu\rangle+|\rom Du|^{p-2}\langle\rom D\partial _iu,\partial_iu\rangle\right)\geq0, \qquad\mbox{in }U.
\end{aligned}
\end{equation}
By taking the sum over $i=1,\ldots,n$ in \eqref{inner2} and observing that
$$
\sum_{i=1}^n(\rom Du:\rom D\partial_i u)\langle\rom Du,\partial_iu\rangle=\left(\rom Du^\rom T\rom Du\right)\cdot\langle\rom D^2u, \rom Du\rangle,
$$
we obtain
\begin{equation}\label{inner3}
\begin{aligned}
\rom{div}\left((p-2)|\rom Du|^{p-4}\left(\rom Du^\rom T\rom Du\right)\cdot\langle\rom D^2u, \rom Du\rangle+|\rom Du|^{p-2}\langle\rom D^2u,\rom Du\rangle\right)\geq0, \qquad\mbox{in }U.
\end{aligned}
\end{equation}
Now, we define 
$$
f:=\frac1p{|\rom Du|^p}, \qquad \rom A:= I+(p-2)\frac{\rom Du^\rom T\rom Du}{|\rom Du|^2},
$$
where $I$ is the identity matrix of $\R^{n\times n}$. Thus, 
$$
\rom Df=|\rom Du|^{p-2}\langle\rom D^2u, \rom Du\rangle,
$$
so that we can rewrite \eqref{inner3} as
\begin{equation}\label{inner4}
\begin{aligned}
\rom{div}(\rom A\rom Df)\geq0, \qquad\mbox{in }U.
\end{aligned}
\end{equation}
Since $u\in C^\infty(\overline U,\R^N)$, the function $f$ is a classical subsolution to a linear elliptic partial differential inequality in divergence form, with smooth and bounded coefficients. By the maximum principle for elliptic PDEs, we have
$$
\max_{\overline U}f=\max_{\partial U}f,
$$
yielding \eqref{GMP}.

\medskip

\noindent
\textit{Step 2}: general case.\\
Let $U\subseteq\Om$ be an open subset. Then, without loss of generality, we can assume that $U\not\subseteq\mathscr C$. Indeed, if $U\subseteq\mathscr C$, then also $\overline U\subseteq\mathscr C$, so that \eqref{GMP} trivially holds. Therefore, since $U\setminus \mathscr C$ is a non-empty open subset of $\Om$, there exists $\eps>0$ such that $U_\eps:=U\setminus\overline{\mathscr C^\eps}$ is a non-empty open subset of $\Om$, where ${\mathscr C^\eps}$ is the open $\eps$-neighbourhood of ${\mathscr C}$. Thanks to \textit{Step 1}, since $U_\eps\Subset\Om\setminus \mathscr C$, we have
\begin{align}\label{GMP eps}
\max_{\overline U_\eps}|\rom Du|=\max_{\partial U_\eps}|\rom Du|.
\end{align}
Now, set $M:=\max_{\overline U}|\rom Du|>0$. By possibly reducing $\eps$ in size, the continuity of $\rom Du$ ensures
\begin{align}\label{eps small}
\max_{\overline{\mathscr C^\eps}}|\rom Du|< M.
\end{align}
Hence
$$
M=\sup_{U}|\rom Du|=\max\left\{\sup_{U_\eps}|\rom Du|,\max_{\overline{\mathscr C^\eps}}|\rom Du| \right\}=\sup_{U_\eps}|\rom Du|= \max_{ \overline{U}_\eps}|\rom Du|.
$$
In particular, from \eqref{GMP eps} we have
$$
\max_{\partial U_\eps}|\rom Du|=M.
$$
On the other hand, we can write
$$
\partial U_\eps=(\partial U\setminus\overline{\mathscr C^\eps})\cup (U\cap \partial{\mathscr C^\eps}),
$$
so that, using \eqref{eps small}, 
$$
M=\sup_{\partial U_\eps}|\rom Du|=\max\left\{\sup_{\partial U\setminus\overline{\mathscr C^\eps}}|\rom Du|, \sup_{U\cap \partial{\mathscr C^\eps}}|\rom Du|\right\}\leq\max\left\{\max_{\partial U}|\rom Du|, \max_{ \overline{\mathscr C^\eps}}|\rom Du|\right\}=\max_{\partial U}|\rom Du|,
$$
which ensures the validity of \eqref{GMP} .\qed\\

\noindent \textsc{Acknowledgements:} S.C and N.K. acknowledge partial financial support through the EPSRC grant EP/X017109/1. S.C and R.M. acknowledge partial financial support through the EPSRC grant EP/X017206/1. S.C. is a member of Gruppo Nazionale per
l’Analisi Matematica, la Probabilità e le loro Applicazioni (GNAMPA) of the Istituto Nazionale
di Alta Matematica (INdAM) of Italy.\\

\noindent \textsc{Data availability}: Our manuscript has no associated data.\\

\noindent \textsc{Conflict of interest}: The authors have no conflict of interest to declare for this article.


\vspace{3mm}
\textsc{Simone Carano}\\
Department of Mathematics and Statistics, University of Reading\\
Whiteknights Campus, Pepper Lane, Reading RG6 6AX, UK\\
E-mail: s.carano@reading.ac.uk\\

\textsc{Nikos Katzourakis}\\
Department of Mathematics and Statistics, University of Reading\\
Whiteknights Campus, Pepper Lane, Reading RG6 6AX, UK\\
E-mail: n.katzourakis@reading.ac.uk\\

\textsc{Roger Moser}\\
Department of Mathematical Sciences, University of Bath\\
Bath BA2 7AY, UK\\
E-mail: r.moser@bath.ac.uk

\end{document}